\newtheorem{theorem}{Theorem}[section]
\newtheorem{proposition}[theorem]{Proposition}
\newtheorem{lemma}[theorem]{Lemma} 
\newtheorem{conjecture}[theorem]{Conjecture}
\newtheorem{corollary}[theorem]{Corollary} 
\theoremstyle{definition}
\newtheorem{definition}[theorem]{Definition}
\theoremstyle{remark}
\newtheorem{example}[theorem]{Example} 
\newtheorem{remark}[theorem]{Remark} 
   \newcommand\R{\mathbb R}
   \newcommand\C{\mathbb C}
   \newcommand\Z{\mathbb Z}
   \newcommand\CP{\mathbb C\mathrm P}
   \newcommand\Symm{\mathfrak S}
   \newcommand\oo{\mathfrak o}
   \newcommand\ii{{\bf i}} 
   \newcommand\sgn{{\rm sgn\,}}
   \newcommand\less[1]{{<_{#1}}} 
   \newcommand\VoronoiCell{\mathscr V}
   \newcommand\PowerCell{\mathscr P}
   \newcommand\ZZ{\mathscr Z}
    \newcommand\PosetOfStratification{\mathrm S}%  $\PosetOfStratification(d,n)$ face poset, minimal element $\hat0$
    \newcommand\CellComplexFullSphere{\mathscr S}% $\CellComplexFullSphere(d,n)$ regular cell complex homeomorphic to $S(W_n^{\oplus d})\cong S^{d(n-1)-1}$, whose face poset is $\PosetOfStratification(d,n)$
    \newcommand\CellComplexFullBall{\mathscr B}% $\CellComplexFullBall(d,n)$ regular cell complex homeomorphic to $B(W_n^{\oplus d}), a PL sphere, given by $S(W_n^{\oplus d})$ plus one additional $d(n-1)$-cell
    \newcommand\SimplicialFullBall{{\rm sd}\,\mathscr B}% $\SimplicialFullBall(d,n)$ barycentric subdivision of $\CellComplexFullBall(d,n)$, a simplicial complex, geometrically embedded
    \newcommand\PosetOfComplement{\mathrm F}%  $\PosetOfComplement(d,n)$ is the poset of all cells of the stratification that lie in the complement of the arrangement, ordered by reversed inclusion. This is the
    \newcommand\CellComplexComplement{\mathscr F}% $\CellComplexComplement(d,n)$ is a regular cell complex of dimension $(d-1)(n-1)$: It is the subcomplex of the dual cell complex to
    \newcommand\SimplicialComplement{{\rm sd}\,\mathscr F}% $\SimplicialComplement(d,n)$ is the barycentric subdivision of $\CellComplexComplement(d,n)$, the order complex
    \newcommand\EAP{\mathrm{EAP}}
\newcommand\EMP{\mathrm{EMP}}
\begin{document}

\title{Convex Equipartitions via Equivariant Obstruction Theory\thanks{%
    The research leading to these results has received funding from the European Research
    Council under the European Union's Seventh Framework Programme (FP7/2007-2013) /
    ERC Grant agreement no.~247029-SDModels. The first author was also supported by the grant ON 174008 of the Serbian
    Ministry of Education and Science.}}
%-------------------------------------------------------------------%
\author{%
Pavle V. M. Blagojevi\'{c}\\
Mathemati\v{c}ki Institut SANU\\Knez Mihailova 36\\11001 Beograd, Serbia\\
\url{pavleb@mi.sanu.ac.rs}\\ and \\ 
Institut f\"ur Mathematik, FU Berlin\\Arnimallee 2\\14195 Berlin, Germany\\
\url{blagojevic@math.fu-berlin.de}
\and \setcounter{footnote}{0}
G\"unter M. Ziegler\\
Institut f\"ur Mathematik, FU Berlin\\Arnimallee 2\\14195 Berlin, Germany\\
\url{ziegler@math.fu-berlin.de}}
\date{{\small September 2012; revised, January 2013}}
\maketitle

\begin{abstract}
 We describe a regular cell complex model for the configuration space $F(\R^d,n)$. Based on this,
 we use Equivariant Obstruction Theory to prove the prime power case of the
 conjecture by Nandakumar and Ramana Rao that every polygon can be partitioned into $n$
 convex parts of equal area and perimeter.  
\end{abstract}

\section{Introduction} 

R. Nandakumar and N. Ramana Rao in June 2006 posed the following problem, 
which in~\cite{BaranyBlagojevicSzuecs} was characterized as ``interesting and annoyingly resistant'':

\begin{conjecture}[Nandakumar \& Ramana Rao \cite{Nandakumar06}]  
    Every convex polygon $P$ in the plane can be partitioned into any prescribed number $n$ of
    convex pieces that have equal area and equal perimeter.
\end{conjecture}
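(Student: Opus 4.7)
The strategy is to convert the equipartition problem into a $\Symm_n$-equivariant zero-finding problem on the configuration space $F(\R^2,n)$, dispatch the prime-power case $n=p^k$ via equivariant obstruction theory, and reduce arbitrary $n$ to its prime-power constituents by iterated subdivision.

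\emph{Configuration-space / test-map setup.} To each point configuration $x=(x_1,\ldots,x_n) \in F(\R^2,n)$ and each weight vector $w \in \R^n$ the plan associates the power (generalized Voronoi) diagram, which partitions the polygon $P$ into $n$ convex cells $P_1(x,w),\ldots,P_n(x,w)$. By a classical convex-optimization argument of Aurenhammer, for each $x$ the weights $w(x)$ can be adjusted uniquely (up to an additive constant) so that every cell has area $\mathrm{Area}(P)/n$; this yields a continuous $\Symm_n$-equivariant family of equal-area convex partitions indexed by $F(\R^2,n)$. Recording the cell perimeters then defines an equivariant test map $\phi:F(\R^2,n)\to \R^n$. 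Equal perimeters amount to $\phi(x) \in \mathrm{diag}(\R)$, so the task becomes finding a zero of the induced equivariant map to $V := \R^n/\mathrm{diag}(\R)$.

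\emph{Prime-power case.} For $n=p^k$ the plan is to restrict the action to the subgroup $G=(\Z/p)^k \le \Symm_n$, which acts freely on $F(\R^2,n)$ via a regular permutation action. Using the regular cellular $G$-CW model of $F(\R^2,n)$ developed in this paper — the main technical contribution announced in the abstract — one computes the primary obstruction to an equivariant map $F(\R^2,n)\to S(V)$ in the appropriate equivariant cohomology group. Showing this obstruction is nonzero forces every equivariant map $F(\R^2,n)\to V$ to vanish somewhere, producing the desired equipartition when $n$ is a prime power.

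\emph{Reduction for general $n$.} Factor $n=n_1 n_2$ nontrivially, induct on the number of prime factors, and try the obvious subdivision: first partition $P$ into $n_1$ equal-area, equal-perimeter convex pieces, then partition each piece into $n_2$ equal-area, equal-perimeter pieces. Equal area among the $n_1 n_2$ final pieces is immediate. The genuine obstacle — and in my view the hard step toward the \emph{full} conjecture — is that equal perimeter \emph{inside} each parent piece does not force the perimeter value to agree \emph{across} parents, because two convex polygons with the same area and perimeter need not be congruent. Overcoming this requires either strengthening the inductive hypothesis to include additional prescribable invariants stable under subdivision, or threading an intermediate-value argument through a continuous one-parameter family of outer partitions so that one can match the collection of inner perimeters. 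The topological method of the previous step itself degenerates when $n$ has two distinct prime divisors, since no suitably large elementary abelian $p$-subgroup of $\Symm_n$ acts freely on $F(\R^2,n)$; so some combinatorial reduction of this type is essential to cover every $n$, and making it work is where I expect the real difficulty to lie.
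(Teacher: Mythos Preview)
The statement is a \emph{conjecture}: it remains open for $n$ not a prime power, and the paper proves only the prime-power case (via Theorems~\ref{thm:main} and~\ref{thm:equipartition}). Your proposal likewise settles only the prime-power case; your third step is, as you yourself concede, not an argument but an identification of the obstacle. The iterated-subdivision idea fails for exactly the reason you name (the common perimeter value need not match across parent pieces), and no repair along these lines is known. So as a proof of the full conjecture the proposal has a genuine gap, but so does the paper --- neither claims more than the prime-power case.

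For that case your outline is close to the paper's but differs in one point worth flagging. You propose restricting to $G=(\Z/p)^k\le\Symm_n$ to obtain a free action, but this is unnecessary: $\Symm_n$ already acts freely on $F(\R^2,n)$, since a nontrivial permutation can fix a configuration only if two of the points coincide. The paper works directly with the full $\Symm_n$-equivariant obstruction on the cell model $\CellComplexComplement(d,n)$; because there is a single $\Symm_n$-orbit of top cells and $n-1$ orbits of ridges of sizes $\binom n1,\dots,\binom n{n-1}$, the obstruction cocycle is a coboundary iff $1$ is an integer combination of these binomial coefficients, i.e.\ iff $n$ is not a prime power. Using the full symmetric group thus yields the converse direction of Theorem~\ref{thm:main} (the equivariant map \emph{does} exist for non-prime-powers) in the same stroke --- something your $(\Z/p)^k$-restriction would not see. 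Relatedly, your diagnosis that for composite $n$ ``no suitably large elementary abelian $p$-subgroup acts freely'' mislocates the failure: every subgroup of $\Symm_n$ acts freely on $F(\R^2,n)$; what actually happens is that the obstruction class vanishes.
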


The conjecture was proved by Nandakumar \& Ramana Rao \cite{NandakumarRamanaRao12} for $n=2$, where it 
follows from the intermediate value theorem, that is, the Borsuk--Ulam theorem for maps $f:S^1\rightarrow S^0$;
the same authors also gave elementary arguments for the result for $n=2^k$.
For $n=3$ the conjecture was proven by Bárány, Blagojevi\'c \& Sz\H{u}cs \cite{BaranyBlagojevicSzuecs} via advanced
topological methods.

For general $n$, it was noted by 
Karasev \cite{Karasev-equipartition} and Hubard \& Aronov \cite{HubardAronov-equipartition}
that the conjecture would follow from the non-existence of an $\Symm_n$-equivariant map  
$F(\R^2,n)\rightarrow S(W_n)$,
where $F(\R^2,n)$ denotes the configuration space of $n$ distinct labeled points in the plane, that is,
the set of all real $2\times n$ matrices with pairwise distinct columns, and
$W_n:=\{(x_1,\dots,x_n)\in\R^n:x_1+\dots+x_n=0\}$ is the set of all row vectors in $\R^n$ with vanishing sum of coordinates.
Both $F(\R^2,n)$ and $W_n$ have the obvious action of~$\Symm_n$ by permuting columns. 
Indeed, assume that a convex polygon $P\subset\R^2$ yields a counterexample to the
Nandakumar and Ramana Rao conjecture for some $n\ge2$. Then we get an $\Symm_n$-equivariant map
\[
\EAP(P,n)\ \longrightarrow_{\Symm_n}\  S(W_n)
\]
from a suitably defined (metric) \emph{space of equal area partitions} of~$P$, denoted $\EAP(P,n)$,
to the sphere $S(W_n)$: It maps the partition $(P_1,\dots,P_n)$ to the vector of perimeters
$(p(P_1),\dots,p(P_n))$, then subtracts the average perimeter $\frac1n(p(P_1)+\dots+p(P_n))$ from each component of this vector,
and finally normalizes the resulting vector to have length~$1$.
Moreover, mapping each piece $P_i$ to its barycenter results in a continuous $\Symm_n$-equivariant map 
\[
\EAP(P,n)\ \longrightarrow_{\Symm_n}\  F(\R^2,n),
\]
while the theory of Optimal Transport (see Section~\ref{sec:transport}) yields a continuous $\Symm_n$-equivariant map 
\[
F(\R^2,n)\ \longrightarrow_{\Symm_n}\ \EAP(P,n).
\]
Thus the existence of an $\Symm_n$-equivariant map
$\EAP(P,n)\longrightarrow_{\Symm_n}S(W_n)$   
is equivalent to the existence of an $\Symm_n$-equivariant map
\[
F(\R^2,n)\ \longrightarrow_{\Symm_n}\  S(W_n).
\]
Similarly, for $d$-dimensional generalizations of the Nandakumar and Ramana Rao problem, asking for partition of a 
$d$-dimensional polytope, or a sufficiently continuous measure on $\R^d$, into $n$ convex pieces on which
$d-1$ continuous functions are equalized, one would try to prove the
non-existence of $\Symm_n$-equivariant maps 
\[
F(\R^d,n)\ \longrightarrow_{\Symm_n}\  S(W_n^{\oplus(d-1)}).
\]
First Karasev \cite{Karasev-equipartition} and later Hubard \& Aronov \cite{HubardAronov-equipartition}
proposed to show this non-existence  
for prime power $n=p^k$ and $d\ge2$ based on a cell decomposition of the one-point compactification
of the configuration space that appears in work by Fuks \cite{Fuks70} and Vassiliev \cite{Vassiliev88}.
Thus they have to deal with twisted Euler classes on non-compact manifolds resp.\ on compactifications
that are not manifolds; no complete treatment for this has been given yet.

Indeed, to see that the situation is not easily handled with Euler class arguments,  
let $E\to M$ be a vector bundle over an orientable open manifold $M$, 
let $E_0$ be the corresponding zero section, 
and let $s:M\to E$ be a generic cross section.
This means that $s(M)$ intersects $E_0$ transversally, and consequently the singular set $s(M)\cap E_0$ is a manifold.
The fundamental class of the singular set lives in the homology with closed supports 
(or Borel--Moore homology) $H_*^{\mathrm{cl}}(M)$, \cite{Borel--More60}.
The homology with closed supports is not a homotopy invariant of the space \cite[Remark on p.~48]{Prasolov05}.
Therefore, the Poincar\'e duality isomorphism, which relates the homology with closed supports
$H_*^{\mathrm{cl}}(M)$ with the relative cohomology of the one-point compactification
$H^*(\Hat{X},\mathrm{pt})$, needs not map the fundamental class of the singular set to the Euler class of the vector bundle $E\to M$.
In particular, the fundamental classes of singular sets of two different generic cross sections do not need to coincide in $H_*^{\mathrm{cl}}(M)$.

In this paper we obtain the desired result, while avoiding these difficulties, via Equivariant Obstruction Theory.
Moreover, this approach yields an ``if and only if'' statement:

\begin{theorem}\label{thm:main}
    For $d,n\ge2$, an $\Symm_n$-equivariant map 
    \[
    F(\R^d,n)\ \longrightarrow_{\Symm_n}\  S(W_n^{\oplus(d-1)})
    \]
    exists if and only if $n$ is not a prime power.
\end{theorem}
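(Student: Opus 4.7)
The plan is to apply equivariant obstruction theory to the regular $\Symm_n$-cell complex model of $F(\R^d,n)$ developed in this paper. Since $F(\R^d,n)$ has the homotopy type of a $(d-1)(n-1)$-dimensional complex (Fuks, Vassiliev) while the target $S(W_n^{\oplus(d-1)})$ has dimension $(d-1)(n-1)-1$ and is $((d-1)(n-1)-2)$-connected, the whole equivariant extension problem is governed by a single primary obstruction
\[
\oo \ \in\ H^{(d-1)(n-1)}_{\Symm_n}\!\bigl(F(\R^d,n);\,\pi_{(d-1)(n-1)-1}(S(W_n^{\oplus(d-1)}))\bigr),
\]
and an $\Symm_n$-equivariant map exists if and only if $\oo=0$. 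Note that $\Symm_n$ acts freely on $F(\R^d,n)$, so the obstruction machinery applies without modification.

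\emph{Existence when $n$ is not a prime power.} For every prime $p$, the Sylow $p$-subgroup $P_p\le\Symm_n$ acts on $\{1,\dots,n\}$ with at least two orbits (its orbit sizes being the nonzero base-$p$ digits of $n$ times the corresponding powers of $p$), so $W_n^{P_p}\ne 0$ and $S(W_n^{\oplus(d-1)})^{P_p}$ is a nonempty sphere. A $P_p$-equivariant map into this fixed sphere therefore exists with no obstruction, so $\mathrm{res}^{\Symm_n}_{P_p}(\oo)=0$. A standard transfer argument (Özaydin) then yields $[\Symm_n{:}P_p]\cdot\oo=0$ for every prime $p$, and since these indices are jointly coprime this forces $\oo=0$.

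\emph{Non-existence when $n=p^k$.} Restrict to an elementary abelian subgroup $E\cong(\Z/p)^k\le\Symm_n$ acting freely and transitively on $\{1,\dots,n\}$. The $E$-action on $F(\R^d,n)$ is then free, the representation $W_n^{\oplus(d-1)}|_E$ contains no trivial summand, and the pull-back of $\oo$ to $E$-equivariant cohomology is represented on $F(\R^d,n)/E$ by the Euler class of the associated vector bundle with fibre $W_n^{\oplus(d-1)}$. Using the explicit regular cell structure one writes down a top-dimensional cocycle representative and verifies that it is nonzero modulo $p$; the non-vanishing reduces to the classical fact that the mod-$p$ Euler class of $W_n^{\oplus(d-1)}|_E$ in $H^*(BE;\mathbb F_p)$ is a nonzero product of linear forms, since $W_n|_E$ has no trivial subrepresentation.

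The hardest step will be the concrete identification and mod-$p$ evaluation of this obstruction cocycle on the regular cell complex: one must track which top-dimensional cells survive the free $E$-action, their coboundary data, and their precise contribution to the Euler class. This is exactly what the cell model developed in this paper is designed to make tractable, reducing the final check to a transparent polynomial computation in $H^*(BE;\mathbb F_p)$.
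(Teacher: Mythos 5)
Your plan is a genuinely different route from the paper's, and the two halves fare differently.

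\textbf{The existence direction (when $n$ is not a prime power).} Your Sylow--transfer argument is correct and complete: if $n$ is not a power of $p$, the Sylow $p$-subgroup $P_p\le\Symm_n$ does not act transitively on $\{1,\dots,n\}$, so $W_n^{P_p}\neq 0$, a constant map to a fixed point of $S(W_n^{\oplus(d-1)})$ is $P_p$-equivariant, hence $\mathrm{res}_{P_p}\oo=0$; transfer gives $[\Symm_n{:}P_p]\cdot\oo=0$, and since the indices $[\Symm_n{:}P_p]$ over all primes are jointly coprime, $\oo=0$. This is Özaydin's localization argument and is different from the paper's. The paper instead computes everything at once by a single cochain calculation: the obstruction cocycle evaluates to $1$ on every maximal cell (Lemma~\ref{lemma4.1}), the equivariant coboundaries have values $\sum_j x_j\binom{n}{j}$ (Lemma~\ref{lemma4.2}), so $\oo$ vanishes iff $\gcd\{\binom{n}{1},\dots,\binom{n}{n-1}\}=1$ (Proposition~\ref{prop:ram}). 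Both directions of Theorem~\ref{thm:main} drop out of this one computation; your transfer argument buys the existence direction independently of the cell model, but it does not touch the non-existence direction.

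\textbf{The non-existence direction (when $n=p^k$).} This is where the gap lies. You restrict to $E\cong(\Z/p)^k$ acting regularly and want to show $\mathrm{res}_E\oo\neq 0$ by identifying it with an Euler class. Two issues. First, it is not automatic that $\mathrm{res}_E\oo\neq 0$ follows from $\oo\neq 0$: the composite $\mathrm{tr}_E\circ\mathrm{res}_E$ is multiplication by $[\Symm_n{:}E]=(p^k)!/p^k$, and for $k\ge 2$ this index \emph{is} divisible by $p$, so the transfer from $E$ cannot detect the $p$-torsion class $\oo$; you genuinely have to compute $\mathrm{res}_E\oo$, not infer its non-vanishing. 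Second, and more importantly, the non-vanishing of the Euler class $e(W_n^{\oplus(d-1)}|_E)\in H^{(d-1)(n-1)}(BE;\mathbb{F}_p)$ does not by itself give what you need. The class you care about is its pullback along the classifying map $\CellComplexComplement(d,n)/E\to BE$, i.e.\ an element of $H^{(d-1)(n-1)}(\CellComplexComplement(d,n)/E)$, and this pullback could a priori be zero; deciding that is precisely a Fadell--Husseini index computation, not ``a transparent polynomial computation in $H^*(BE;\mathbb{F}_p)$.'' This restriction-to-$E$/Euler-class route is in essence the Karasev and Hubard--Aronov strategy, which the authors explicitly set out to bypass because of exactly these difficulties (twisted Euler classes on a noncompact quotient, or a compactification that is not a manifold). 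The cell model makes the direct $\Symm_n$-equivariant cochain computation in Lemmas~\ref{lemma4.1}--\ref{lemma4.2} clean; your proposal invokes the cell model but then defers the very calculation that is the content of the theorem. As written, the non-existence half is not a proof.
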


For this, we describe in Section~\ref{sec:cellcomplex} 
a beautiful $\Symm_n$-equivariant $(d-1)(n-1)$-dimensional cell complex with $n!$ vertices
and $n!$ facets that is an $\Symm_n$-equivariant strong deformation retract of $F(\R^d,n)$.
For $d=2$ this complex is implicit in work by Deligne \cite{Deligne72} and explicit in 
Salvetti's work \cite{Salvetti}. For $d\ge2$ we obtain it by specialization of a construction
due to Björner \& Ziegler \cite{BjornerZiegler-combstrat}.

Theorem~\ref{thm:main} in particular resolves the original problem by Nandakumar \& Ramana Rao
for prime powers $n$. 

\begin{theorem}\label{thm:equipartition}
    Let $d\ge2$ and let $\mu_d$ be an absolutely continuous probability measure on $\R^d$
    (given by a nonnegative Lebesgue integrable density function with convex support).
    
\noindent
    If $n$ is a prime power, then for any $d-1$ continuous functions $f_i$, $1\le i\le d-1$,
    on the space of nonempty convex bodies there is a partition
    of $\R^d$ into $n$  convex regions $C_1,\dots,C_n\subset\R^d$ that equiparts~$\mu_d$,
    \[
    \mu_d(C_1)=\dots=\mu_d(C_n)=\tfrac1n,\hspace{30mm}
    \]
    and that at the same time equalizes the functions $f_1,\dots,f_{d-1}$:
    \[ 
    f_{i}(C_1)=\dots=f_{i}(C_n)\qquad\text{for }1\le i\le d-1.
    \]
\end{theorem}

We note that this solves the Nandakumar and Ramana Rao Conjecture for all $d\ge2$ and all prime powers $n\ge2$,
and the problem is quite easily solved for $d=1$, while the original problem remains open already for $d=2$ and $n=6$.

Nevertheless, from Theorem~\ref{thm:main} we also get a much stronger theorem for the prime power case, for continuous functions
on the \emph{space of equal measure partitions}, whose components need not depend only on the corresponding pieces.
This generalization \emph{fails} if $n$ is not a prime power.

\begin{theorem}\label{thm:equipartition2}
    Let $d\ge2$ and let $\mu_d$ be an absolutely continuous probability measure on $\R^d$.
    
\noindent
    If $n$ is a prime power, then for any $d-1$ continuous $\Symm_n$-equivariant functions $F_i=(f_{1,i},\dots,f_{n,i})$, $1\le i\le d-1$,
    on the space $\EMP(\mu_d,n)$ of partitions of $\R^d$ into $n$ convex regions $C_1,\dots,C_n\subset\R^d$ that equipart~$\mu_d$,
    \[
    \mu_d(C_1)=\dots=\mu_d(C_n)=\tfrac1n,\hspace{30mm}
    \]
    there is such a partition $(C_1,\dots,C_n)\in\EMP(\mu_d,n)$  
    which at the same time equalizes the components of each of the functions $F_1,\dots,F_{d-1}$:
    \[ 
    f_{1,i}(C_1,\dots,C_n)=\dots=f_{n,i}(C_1,\dots,C_n)\qquad\text{for }1\le i\le d-1.
    \]  
    If $n$ is not a prime power, then this is false: For any given $d\ge2$, \textbf{any} 
    continuous positive probability measure $\mu_d$ on~$\R^d$, and $n$ not a prime power,
    there are $\Symm_n$-equivariant continuous functions $F_1=(f_{1,1},\dots,f_{n,1}),\dots,F_{d-1}=(f_{1,n},\dots,f_{n,n})$ on the space  
	$\EMP(\mu_d,n)$ of equal measure partitions of $\R^d$ into $n$ convex polyhedra
	such that no such partition equalizes the components for all $d-1$ functions~$F_1,\dots,F_{d-1}$.
\end{theorem}

An example of a function $F_1=(f_{1,1},\dots,f_{n,1})$ that satisfies the conditions of Theorem~\ref{thm:equipartition2}
is given by
\[
f_{i,1}(C_1,\dots,C_n)\ :=\ \textrm{diam}(C_i)-\tfrac1n(\textrm{diam}(C_1)+\dots+\textrm{diam}(C_n)),
\]
as the components $f_{i,1}$ of this function $F_1$ do not depend on $C_1$ alone.

Our proof of Theorem~\ref{thm:main} relies on Equivariant Obstruction Theory
as formulated by tom Dieck \cite[Sec.~II.3]{tomDieck:TransformationGroups}. 
We happen to be in the fortunate situation where we have to deal only with primary obstructions,
for mapping a cell complex of dimension $(d-1)(n-1)$ to a sphere of dimension $(d-1)(n-1)-1$.
Moreover, the cell complex has a single orbit of maximal cells (“facets”), on which the
obstruction cocycle evaluates to~$1$. At the same time, we have $n-1$ orbits of cells of
dimension $(d-1)(n-1)-1$ (called “subfacets” or “ridges”); these orbits have 
cardinalities $\binom ni$. The proof is completed via a number theoretic fact about the
$n$-th row of the Pascal triangle:
\[
 \text{gcd}\Big\{\binom n1,\binom n2,\dots,\binom n{n-1}\Big\}\ =\ 
 \begin{cases} p & \text{if }n\text{ is a prime power, }n=p^k,\\
    1 & \text{otherwise.}
 \end{cases}
\]
This may be derived from the work by Legendre and Kummer on prime factors in factorials
and binomial coefficients, apparently was first proved by Ram \cite{Ram1909} in 1909,
extended by Joris et al.~\cite{JorisOestreicherSteinig}, and
was later rediscovered by A. Granville
(see Soulé \cite{Soule04} and Kaplan \& Levy~\cite{KaplanLevy04}).

\section{From Convex Partitions to Configuration Spaces}\label{sec:transport} 

In this section we briefly sketch the provenance of the $\Symm_n$-equivariant map 
\[
F(\R^d,n)\ \longrightarrow_{\Symm_n}\  \EMP(\mu_d,n)
\]
used in our proof of the first half of Theorem~\ref{thm:main} (following Hubard \& Aronov \cite{HubardAronov-equipartition}). 
It maps point configurations to convex partitions that 
are “regular” in the sense of Gelfand, Kapranov \& Zelevinsky \cite{GKZ} \cite[Lect.~4]{Ziegler-poly},
that is, which arise from piecewise-linear convex functions.
In Computational Geometry such partitions are known as \emph{generalized Voronoi diagrams} or \emph{power diagrams}.
The equivalence between partitions induced by Optimal Transport and those given by the domains of
linearity of a minimum of $n$ linear functions was proved by Aurenhammer \cite{Aurenhammer87},
and certainly also by others; see Siersma \& van Manen \cite{Siersma_van_Manen}.  
 Theorem~\ref{thm:equipartition} yields
Theorem~\ref{thm:main} as regular equipartitions are parametrized by
$F(\R^d,n)$. This remarkable observation
can be traced back to Minkowski \cite{Minkowski1897,Minkowski1903}. It was independently
developed in Optimal Transport (see Villani \cite[Chap.~2]{Villani03}) and in 
Computational Geometry (see Aurenhammer, Hoffmann \& Aronov \cite{AurenhammerHoffmannAronov92,AurenhammerHoffmannAronov98}).

A classical case of regular partitions is known 
in Computational Geometry as \emph{Voronoi diagrams} and in Number Theory as \emph{Dirichlet tesselations}.
These are obtained as $\VoronoiCell (x_1,\dots,x_n)= (C_1,\dots,C_n)$ via
\begin{eqnarray*}
    C_i&:=&\{x\in\R^d : \|x-x_i\|\le\|x-x_j\|\text{ for }1\le j\le n\}\\
       & =&\{x\in\R^d : \|x-x_i\|^2-\|x\|^2\le\|x-x_j\|^2-\|x\|^2\text{ for }1\le j\le n\}.
\end{eqnarray*}
Thus $C_i$ is the domain in $\R^d$ where the linear (!) function $f_i(x)=\|x-x_i\|^2-\|x\|^2$
yields the minimum among all functions $f_j(x)$ of the same type, 
and thus the Voronoi diagram is given by the domains of linearity
of the piecewise-linear convex function $f(x)=\min\{ \|x-x_i\|^2-\|x\|^2:1\le i\le n\}$.

For \emph{generalized Voronoi diagrams} or \emph{power diagrams} we introduce additional real
weights $w_1,\dots,w_n\in\R$, and set $\PowerCell(x_1,\dots,x_n;w_1,\dots,w_n)= (P_1,\dots,P_n)$ 
with
\[
P_i:=\{x\in\R^d : \|x-x_i\|^2-w_i\le\|x-x_j\|^2-w_j\text{ for }1\le j\le n\}.
\] 
As an additive constant does not change the partition, we may without loss of generality replace
$w_i$ by $w_i-\frac1n(w_1+\dots+w_n)$, and thus assume that $w_1+\dots+w_n=0$, that is, $(w_1,\dots,w_n)\in W_n$.

Again, the partition of space given by a generalized Voronoi diagram is a regular subdivision. However,
now some of the parts~$P_i$ may be empty, and $x_i\in P_i$ does not hold in general.
We refer to Aurenhammer \& Klein \cite{AurenhammerKlein00} 
and Siersma \& van Manen \cite{Siersma_van_Manen} for further background.

The major part of the following theorem (except for the continuity, which of course is essential for us)
was provided by Aurenhammer, Hoffmann \& Aronov \cite{AurenhammerHoffmannAronov92} in 1992. 
The 1998 journal version \cite{AurenhammerHoffmannAronov98} of their paper noted the connection to
Optimal Transport; thus, the proof can be done using optimal assignment and linear programming duality,
as developed (for this purpose) by Kantorovich in 1939 \cite{Kantorovich40}, with the
``double convexification'' trick. For a modern exposition
see Villani \cite[Chap.~2]{Villani03}. Evans~\cite{Evans_OptimalTransport} provides a 
nice account based on discretization. Alternatively,  
Gei\ss~et~al.\ \cite{GKPR}
obtain the optimal weights from minimizing a quadratic objective function. 
  
\begin{theorem}[Kantorovich 1939, etc.]
    Let $\mu_d$ be an absolutely continuous probability measure on~$\R^d$ 
    and $n\ge2$. 
    Then for any {$x_1,\dots,x_n\in\R^d$}
    with $x_i\neq x_j$ for all $1\le i<j\le n$ there are weights $w_1,\dots,w_n\in\R$, $w_1+\dots+w_n=0$,
    such that the power diagram $\PowerCell(x_1,\dots,x_n;w_1,\dots,w_n)$
    equiparts the measure~$\mu_d$. Moreover, the weights $w_1,\dots,w_n$  
    \begin{compactitem}[$\circ$]
        \item are unique,
        \item depend continuously on $x_1,\dots,x_n$, and
        \item can be characterized/computed via optimal assignment with respect to  
          quadratic cost functions.%
    \end{compactitem}
\end{theorem}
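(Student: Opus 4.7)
The plan is to derive existence, uniqueness, and continuity from a single variational principle, namely the Kantorovich dual of the quadratic optimal-transport problem from $\mu$ to the uniform discrete measure $\tfrac1n\sum_{i=1}^n\delta_{x_i}$. For fixed distinct $x_1,\dots,x_n\in\R^d$ I would introduce
\[
\Phi(w)\ :=\ \int_{\R^d}\min_{1\le j\le n}\bigl(\|x-x_j\|^2-w_j\bigr)\,d\mu(x)\ +\ \frac1n\sum_{j=1}^n w_j
\]
on the hyperplane $W_n$. The integrand is a pointwise minimum of affine functions of $w$, so $\Phi$ is concave on $W_n$. Since $\mu$ is continuous, the common boundary of any two adjacent power cells has $\mu$-measure zero, and a direct calculation shows that $\Phi$ is differentiable with $\partial\Phi/\partial w_i=\tfrac1n-\mu(P_i(w))$. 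Hence critical points of $\Phi$ on $W_n$ are exactly the weight vectors for which $\PowerCell(x_1,\dots,x_n;w_1,\dots,w_n)$ equiparts $\mu$.

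For existence I would show that $\Phi$ is coercive on $W_n$. If $\|w\|\to\infty$ within $W_n$ then some coordinate $w_i$ tends to $+\infty$; in this regime the cell $P_i(w)$ swallows a subset of $\R^d$ of $\mu$-measure tending to $1$, forcing the integral term of $\Phi$ to drop essentially like $-w_i$, while the linear correction $\tfrac1n\sum w_j$ contributes only $w_i/n$. Hence $\Phi(w)\to-\infty$, and concavity plus coercivity yield a global maximizer, producing the desired weights.

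Uniqueness is the step that really uses positivity of $\mu$. I would argue that if $w\ne w'$ in $W_n$ were both maximizers, then by concavity the whole segment between them consists of maximizers; but as $w$ slides along this segment the bisecting hyperplanes separating pairs of cells translate, and because $\mu$ has full support they sweep out sets of positive $\mu$-measure, which produces a strict decrease of $\Phi$ away from one endpoint --- contradiction. Equivalently, Kantorovich potentials for the quadratic transport problem are unique up to an additive constant, and the constraint $\sum w_i=0$ pins that constant down.

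Finally, continuity of $w$ as a function of $(x_1,\dots,x_n)\in F(\R^d,n)$ follows from Berge's maximum theorem: $\Phi$ is jointly continuous in $w$ and the parameters, its argmax on $W_n$ is unique by the previous paragraph, and the coercivity estimate is uniform on compact subsets of parameters. The characterization through optimal assignment for quadratic cost is immediate from the construction, since $\Phi$ is precisely the discrete-side Kantorovich dual functional and its maximum equals the optimal transport cost. The main obstacle in this plan is the uniqueness step: the concavity of $\Phi$ is not automatically strict along $W_n$, and converting it into strict concavity requires careful use of the continuity and positivity of $\mu$ to exclude hypothetical directions along which the partition, and hence $\Phi$, stays constant.
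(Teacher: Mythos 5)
The paper does not actually prove this theorem: it cites Kantorovich, Aurenhammer--Hoffmann--Aronov, Villani \cite[Chap.~2]{Villani03}, and Evans, and explicitly points to the LP-duality / optimal-assignment route with the ``double convexification'' trick. Your proposal is a faithful reconstruction of exactly that route. Maximizing the Kantorovich dual functional $\Phi$ over $W_n$ is the right device, the gradient identity $\partial\Phi/\partial w_i = \tfrac1n - \mu(P_i(w))$ (valid because cell boundaries are $\mu$-null for a continuous measure) is the correct bridge from critical points to equipartitions, and Berge's maximum theorem combined with a coercivity estimate that is locally uniform in $(x_1,\dots,x_n)$ does give the continuity the paper needs. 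So you are doing the same thing the paper defers to the literature for.

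Two remarks. The coercivity phrasing does not parse as stated: on $W_n$ the term $\tfrac1n\sum_j w_j$ is identically zero, so the remark that it ``contributes only $w_i/n$'' is out of place. What actually works is: if $\|w\|\to\infty$ on $W_n$ then $\max_j w_j\to+\infty$ (since the coordinates sum to zero), and for the maximizing index $i$ the integrand is bounded above by $\|x-x_i\|^2-w_i$, so already the integral term of $\Phi$ tends to $-\infty$. As for the uniqueness obstacle that you correctly flag as the crux, it can be closed without proving strict concavity of $\Phi$: if $w\ne w'$ in $W_n$ both maximize $\Phi$, concavity forces $\Phi$ to be constant on $[w,w']$, and because for each $x$ the integrand is concave in $t$ with nonnegative Jensen defect, constancy of the integral forces the minimizing index to be the same at $w$ and $w'$ for $\mu$-a.e.~$x$. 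Full support of $\mu$ upgrades this to equality of cell interiors. Each equiparting cell has $\mu$-measure $1/n>0$, hence nonempty interior, so none is empty; the adjacency graph of a power diagram with all cells nonempty is connected, and each shared facet lies on the hyperplane $\{2\langle x,x_j-x_i\rangle=\|x_j\|^2-\|x_i\|^2+w_i-w_j\}$, which (since $x_i\neq x_j$) determines $w_i-w_j$. Connectedness then determines $w$ up to an additive constant, and $\sum w_i=0$ forces $w=w'$.
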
  

In short, this theorem yields a continuous, $\Symm_n$-equivariant map
$F(\R^d,n)\rightarrow_{\Symm_n}\EMP(\mu_d,n)$. 
Moreover, mapping to the barycenters (with respect to the measure $\mu_d$) of the pieces
yields an $\Symm_n$-equivariant map
$\EMP(\mu_d,n)\rightarrow_{\Symm_n}F(\R^d,n)$.
Thus  the claim of Theorem~\ref{thm:equipartition2} is equivalent
to the fact that an $\Symm_n$-equivariant continuous map 
\[
  F(\R^d,n) \ \longrightarrow_{\Symm_n}\ W_n^{\oplus(d-1)}\setminus\{0\}.
\]
exists if and only if $n$ is not a prime power. This will be established in the following.
 
\section{A Cell Complex Model for $F(\R^d,n)$}\label{sec:cellcomplex}

In order to apply obstruction theory to $F(\R^d,n)$, we specialize the
cell complex models given for complements of real subspace arrangements 
by Björner \& Ziegler \cite{BjornerZiegler-combstrat}.
For $d=2$, the barycentric subdivision of this cell complex is treated implicitly in Deligne \cite{Deligne72}.
The cell complex for a general complement of a complexified hyperplane arrangements is
developed in more detail by Salvetti~\cite{Salvetti} and thus known as the ``Salvetti complex.'' 
These cell complex models are closely related to the cell complex structures described already in 1962 by 
Fox \& Neuwirth \cite{FoxNeuwirth62}, and later by Fuks \cite{Fuks70} 
for $S^{2n}=\R^{2n}\cup\{*\}\supset F(\R^2,n)$.
The analogous constructions for $F(\R^d,n)$ were done by Vassiliev \cite{Vassiliev:Discriminants}.
See also de Concini \& Salvetti \cite{DeConciniSalvetti1990} and Basabe et al.~\cite{BasabeEtAl},
as well as very recently Guisti \& Sinha \cite{GiustiSinha} and Ayala \& Hepworth~\cite{AyalaHepworth}.%

To obtain our cell complex model, we first retract $F(\R^d,n)$ by $\overline x_i:=x_i-\frac1n(x_1+\dots+x_n)$~to% 
\[
\overline F(\R^d,n):=\{(x_1,\dots,x_n)\in\R^{d\times n}:x_1+\dots+x_n=0,\ x_i\neq x_j\text{ for }1\le i<j\le n\},
\]
which is the complement of an {essential} arrangement of linear subspaces of codimension $d$
in the vector space $W_n^{\oplus d}=\{(x_1,\dots,x_n)\in\R^{d\times n}:x_1+\dots+x_n=0\}$ of dimension $d(n-1)$.
(An arrangement of linear subspaces is \emph{essential} if the intersection of all the subspaces is
the origin.)
All the subspaces in the arrangement have codimension $d$ and all their intersections have codimensions that are multiples of~$d$,
so this is a \emph{$d$-arrangement} in the sense of 
Goresky \& MacPherson \cite[Sect.~III.4]{GoreskyMacPherson}. See Björner \cite{Bjorner-ecm} for more background on subspace arrangements.
 
To construct the \emph{Fox--Neuwirth stratification} of this essential arrangement of linear subspaces
(which, except for the ordering of the coordinates,  is equivalent
to the ``$s^{(1)}$-stratification'' of \cite[Sect.~9.3]{BjornerZiegler-combstrat}), we  
order the $n$ columns of the matrix $(x_1,\dots,x_n)\in W_n^{\oplus d}$ in lexicographic order. 
That is, we find a unique permutation $\sigma: i\mapsto\sigma_i$ ($1\le i\le n$), which we denote by $\sigma=\sigma_1\sigma_2\dots\sigma_n\in\Symm_n$,
such that for $1\le j<n$,
\begin{compactitem}[ $\bullet$]
    \item either for some $i_j$, $1\le i_j\le d$, we have
             $x_{i_j,\sigma_j}<x_{i_j,\sigma_{j+1}}$ with $x_{i',\sigma_j}=x_{i',\sigma_{j+1}}$ for all $i'<i_j$,
    \item or $x_{\sigma_j}=x_{\sigma_{j+1}}$; in this case we set $i_j=d+1$ and impose
            the extra condition that $\sigma_j<\sigma_{j+1}$.
\end{compactitem}
Then we assign to $(x_1,\dots,x_n)$ the combinatorial data 
\[
     (\sigma_1\less{i_1}\sigma_2\less{i_2}\cdots\less{i_{n-1}}\sigma_n),
\]     
where $\sigma=\sigma_1\sigma_2\dots\sigma_n\in\Symm_n$ is a permutation and
$\ii=(i_1,\dots,i_{n-1})\in\{1,\dots,d,d+1\}^{n-1}$ is a vector of coordinate indices.  

\begin{example}\label{ex:8points_permutation}
    Let $d=2$, $n=8$: The $8$ points $x_1,\dots,x_8\in\R^2$ in the left part of Figure~\ref{fig:figure1}
    \begin{figure}[ht!]
        \begin{center}
            \medskip
        \begin{tikzpicture}
        	\draw[dashed] (0,0) -- (0,3.75);	
        	\draw[dashed] (-1,0.75) -- (-1,2.25);
        	\draw[dashed] (-2.4,0) -- (-2.4,4);
        	\draw[dashed] (-3,1.1) -- (-3,3);
        %----------------------------------------------------------------------
        	\fill (0,0.75) circle (2pt) node[right] {$x_6$};
        	\fill (0,2.25) circle (2pt) node[right] {$x_5$};
        	\fill (0,3) circle (2pt) node[right] {$x_2$};

        	\fill (-1,1.5) circle (2pt) node[right] {$x_7$};

        	\fill (-2.4,0.2) circle (2pt) node[right] {$x_1$};
        	\fill (-2.4,3.5) circle (2pt) node[right] {$x_4$};

        	\fill (-3,1.5) circle (2pt) node[right] {$x_3$};
        	\fill (-3,2.3) circle (2pt) node[right] {$x_8$};
        \end{tikzpicture}
        \hspace{33mm}
        \begin{tikzpicture}
        	\draw[dashed] (0,0) -- (0,3.75);	
        	\draw[dashed] (-1,0.75) -- (-1,2.25);
        	\draw[dashed] (-3,-0.2) -- (-3,4);
        %----------------------------------------------------------------------
        	\fill (0,0.75) circle (2pt) node[right] {$x'_6$};
        	\fill (0,2.25) circle (2pt) node[right] {$x'_5$};
        	\fill (0,3) circle (2pt) node[right] {$x'_2$};

        	\fill (-1,1.5) circle (2pt) node[right] {$x'_7$};

        	\fill (-3,0.375) circle (2pt) node[right] {$x'_3$};
        	\fill (-3,1.5) circle (2pt) node[right] {$x'_1$};
        	\fill (-3,2.625) circle (2pt) node[right] {$x'_8$};
        	\fill (-3,3.375) circle (2pt) node[right] {$x'_4$};
        \end{tikzpicture}
    \end{center}
        \caption{\label{fig:figure1}Two point configurations for $d=2$, $n=8$}
        % \\
        \end{figure}
    correspond to the permutation $\sigma=38147652$.
    The combinatorial data for this configuration are
    $(\sigma,\ii)=(3\less{2}8\less{1}1\less{2}4\less{1}7\less{1}6\less{2}5\less{2}2)$.
    For the point configuration on the right we get the permutation $\sigma'=31847652$
    and the combinatorial data 
    $(\sigma',\ii')=(3\less{2}1\less{2}8\less{2}4\less{1}7\less{1}6\less{2}5\less{2}2)$.
\end{example}

All the points in $W_n^{\oplus d}$ with the same combinatorial data $(\sigma,\ii)$ make up a \emph{stratum} 
that we denote by $C(\sigma,\ii)$. This stratum is the relative interior of a polyhedral cone
of codimension $(i_1-1)+\dots+(i_{n-1}-1)$, that is, 
of dimension $(d+1)(n-1)-(i_1+\dots+i_{n-1})$ in~$W_n^{\oplus d}$.  

\begin{example}\label{ex:8points_permutation2}
The stratum $C(\sigma,\ii)=(3\less{2}8\less{1}1\less{2}4\less{1}7\less{1}6\less{2}5\less{2}2)$ of Example~\ref{ex:8points_permutation}
has dimension~$10$. It~is%  
\begin{eqnarray*}
\left\{
\begin{pmatrix} 
x_{11} & x_{12} & \ldots & x_{18} \\
x_{21} & x_{22} & \ldots & x_{28}
\end{pmatrix}\right. \in W_8^{\oplus 2}\subset\R^{2\times 8} : 
\left.\begin{array}{c@{\,\,}c@{\,\,}c@{\,\,}c@{\,\,}c@{\,\,}c@{\,\,}c@{\,\,}c@{\,\,}c@{\,\,}c@{\,\,}c@{\,\,}c@{\,\,}c@{\,\,}c@{\,\,}c} 
    x_{13} & = & x_{18} & < & x_{11} & = & x_{14} & < & x_{17} & < & x_{16} & = & x_{15} & = & x_{12} \\
    x_{23} & < & x_{28} &   & x_{21} & < & x_{24} &   & x_{27} &   & x_{26} & < & x_{25} & < & x_{22}   
\end{array}
\right\}.
\end{eqnarray*}  
\end{example}
  
\begin{lemma}\label{lemma:combstrat}
The closure of each stratum $C(\sigma,\ii)\subset W_n^{\oplus d}$  
is a union of strata, namely of all strata $C(\sigma',\ii')$ such that 
\begin{itemize}
\item[$(*)$] 
        If $\ \ldots\sigma_k   \ldots\less{i_p'}\ldots\sigma_\ell\ldots\ $ appear in this order in $(\sigma',\ii')$, then\\  
    either $\ \ldots\sigma_k   \ldots\less{i_p} \ldots\sigma_\ell\ldots\ $ appear in this order in $(\sigma,\ii)$ with $i_p\le i_p'$,\\
    or     $\ \ldots\sigma_\ell\ldots\less{i_p} \ldots\sigma_k   \ldots\ $ appear in this order in $(\sigma,\ii)$ with $i_p  < i_p'$.
\end{itemize}
\end{lemma}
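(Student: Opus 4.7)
The plan is to establish the lemma by writing each stratum $C(\sigma,\ii)$ explicitly as the relative interior of a polyhedral cone and then comparing the systems of linear inequalities that define different strata. By the definition in the text, $C(\sigma,\ii)$ is the locally closed subset of $W_n^{\oplus d}$ cut out, for each $j=1,\dots,n-1$, by the equalities $x_{i',\sigma_j}=x_{i',\sigma_{j+1}}$ for all $i'<i_j$ together with either the strict inequality $x_{i_j,\sigma_j}<x_{i_j,\sigma_{j+1}}$ (when $i_j\le d$) or the equality $x_{\sigma_j}=x_{\sigma_{j+1}}$ combined with the tie-breaking condition $\sigma_j<\sigma_{j+1}$ (when $i_j=d+1$). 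The topological closure $\overline{C(\sigma,\ii)}$ is then obtained by relaxing each strict inequality to a weak one and dropping the tie-breaking convention, so that $\overline{C(\sigma,\ii)}$ is a closed polyhedral cone in $W_n^{\oplus d}$.

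Since $W_n^{\oplus d}$ decomposes disjointly as the union over all combinatorial data $(\sigma',\ii')$ of the strata $C(\sigma',\ii')$, the closure $\overline{C(\sigma,\ii)}$ is automatically a union of strata, and the question reduces to identifying which. One direction is immediate: a stratum $C(\sigma',\ii')$ lies in $\overline{C(\sigma,\ii)}$ exactly when every defining weak inequality of $\overline{C(\sigma,\ii)}$ is satisfied throughout $C(\sigma',\ii')$. For the converse, given $y\in C(\sigma',\ii')$, I would exhibit a convergent sequence $y^{(t)}:=y+\tfrac1t v\in C(\sigma,\ii)$, where $v$ is any fixed base point of $C(\sigma,\ii)$; for $t$ large the strict inequalities of $y$ are preserved while its equalities are resolved in the direction prescribed by $v$, and the compatibility condition $(*)$ is exactly what makes this resolution consistent with the pattern $(\sigma,\ii)$.

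The heart of the argument is the translation between these linear inequalities and the combinatorial data. The relevant local rule is that on any stratum $C(\tau,\jj)$, if labels $a$ and $b$ appear in the order $a$-then-$b$ in $\tau$ and $j^*$ denotes the minimum of the separators $j_q$ strictly between them, then $x_{i',a}=x_{i',b}$ for $i'<j^*$, while $x_{j^*,a}<x_{j^*,b}$ if $j^*\le d$ (and $x_a=x_b$ if $j^*=d+1$). Applying this rule to both $(\sigma,\ii)$ and $(\sigma',\ii')$ and comparing pair by pair reduces the containment question to exactly the two cases listed in~$(*)$. The main obstacle, and the only subtle combinatorial point, is to verify the asymmetry between the two alternatives: the first permits $i_p\le i_p'$, while the reversed-order alternative demands the strict inequality $i_p<i_p'$. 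This asymmetry arises because in the preserved-order case the shared ``first coordinate of disagreement'' may coincide in both data, whereas in the reversed-order case the pair $\sigma_k,\sigma_\ell$ must already be distinguished in a strictly higher-priority coordinate under $(\sigma,\ii)$ in order for the ordering to flip upon passing to the limit.
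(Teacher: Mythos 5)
Your proposal is correct and follows essentially the same strategy as the paper: write each stratum as the relative interior of a polyhedral cone defined by the consecutive-pair equalities and strict inequalities, observe that the topological closure of such a set is obtained by weakening the strict inequalities, and then re-interpret the resulting weak system in terms of the combinatorial data. Your ``local rule'' paragraph (first coordinate of disagreement between $a$ and $b$ equals the minimum separator between them) is exactly the translation that the paper's proof appeals to implicitly, and your explanation of the $\le$ vs.\ $<$ asymmetry in~$(*)$ captures the paper's sentence about ``the smallest index of a coordinate in which two points differ may not go down'' and ``if it stays the same, the order in this coordinate must be preserved.''

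The one point worth flagging is the role of the perturbation sequence $y^{(t)} = y + \tfrac1t v$. Showing that $y^{(t)}\in C(\sigma,\ii)$ for large $t$ again requires the same coordinate-by-coordinate verification that $y$ satisfies the weakened inequalities of $C(\sigma,\ii)$, so the sequence does not buy you a shortcut past the inequality comparison — it merely repackages the standard fact that a polyhedral cone is the closure of its relative interior. That is not a gap (the argument is valid and the verification does go through: condition $(*)$ forces $i^*_\sigma\le i^*_{\sigma'}$ for every pair, and in the equality case it forbids an order reversal, which is exactly what makes $v$ break ties consistently with $\sigma$), but you could simply cite the polyhedral-cone fact and go directly to the inequality comparison, which is what the paper's proof does.
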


\begin{proof}
    Let $(x_1,\dots,x_n)$ be a configuration of points that lies in the stratum
    $C(\sigma,\ii)$, which is a relatively open polyhedral cone characterized by equations 
    and strict inequalities. The closure of the cone is given by the condition
    that the equations still hold, and the inequalities still hold weakly.
    Thus the smallest index of a coordinate in which two points $x_{\sigma_k}$ and $x_{\sigma_\ell}$
    differ may not go down when moving to $x_{\sigma_k}'$ and $x_{\sigma_\ell}'$,
    and if it stays the same, then the order in this coordinate must be preserved.
    
    Thus if $\ldots\sigma_k \ldots\less{i_p'}\ldots\sigma_\ell\ldots$ appear in this order in $(\sigma',\ii')$
    and $i_p'$ is not the smallest such index, then no condition is posed. If $i_p'$ is the 
    smallest such index, then we either have the same in $(\sigma,\ii)$,
    or there is a smaller index $i_p$. If there is no smaller index, then we still require
    that $\ldots\sigma_k \ldots\less{i_p'}\ldots\sigma_\ell\ldots$ appear in this order in $(\sigma,\ii)$.
    If there is a smaller index $i_p<i_p'$ between $\sigma_k$ and $\sigma_\ell$, then the order is arbitrary. 
        
    Condition $(*)$ describes this.   
\end{proof}
 
\begin{example}
    For the point configurations of Example~\ref{ex:8points_permutation}, 
    the $9$-dimensional stratum $C(\sigma',\ii')$ lies in the boundary of the $10$-dimensional stratum $C(\sigma,\ii)$.
\end{example}

\begin{definition}
    For $d\ge1$, $n\ge2$ the set of pairs $(\sigma,\ii)$ with the partial order
    $(\sigma,\ii)\ge(\sigma',\ii')$ described by Condition $(*)$ in Lemma~\ref{lemma:combstrat}
    is denoted by $\PosetOfStratification (d,n)$.
    Its minimal element $(1\less{{d+1}}2\less{{d+1}}\cdots\less{{d+1}}n)$ is denoted by~$\hat0$.
\end{definition}

\begin{example}\label{ex:Face23} The face poset of $\PosetOfStratification(2,3)$ 
    is displayed in Figure~\ref{fig:bigposet23}. (Compare  
    \cite[Fig.~2.3]{BjornerZiegler-combstrat}.)
    Here black dots denote cells in the complement of the arrangement given by the codimension~$2$ subspaces
    ``$x_i=x_j$'' (that is, lying in the configuration space
    $F(\R^2,3)$), while the white dots correspond to cells on the arrangement.
  
\begin{sidewaysfigure} 
     \begin{tikzpicture}[scale = 1]
    	\def\y{2.8}
    	\def \o{1.636363636363636363636}
    	\def \p{2.290909090909090909090}
              \def \coord{5.2727272727272727272727272727}

    \tikzstyle{block} = [rectangle, draw,   text width=5em, text centered, rounded corners, minimum height=4em]

    % Bubbles ----------------------------------------------------------------------
     \draw (2.5,2) node[rectangle callout, draw, rounded corners, text  = black, minimum height=2cm, minimum width = 1.5cm,callout relative  pointer={(0.4,-0.3)}] {};
    \draw[densely dashed] (2.4,1.5) -- (2.4,2.8);
    \fill (2.4,2.5) circle (2pt) node[below right] {$x_2$};
    \draw[densely dashed] (2.1,1.2) -- (2.1,2.8);
    \fill (2.1,1.5 ) circle (2pt) node[below right] {$x_1$};
    \draw[densely dashed] (2.7,1.2) -- (2.7,2.8);
    \fill (2.7,1.8) circle (2pt) node[below right] {$x_3$};

     \draw (-0.75,-0.75) node[rectangle callout, draw, rounded corners, text  = black, minimum height=2cm, minimum width = 1.5cm,callout relative  pointer={(0.4,-0.3)}] {};
    \draw[densely dashed] (-0.6,0.0) -- (-0.6,-1.6);
    \fill (-0.6,-0.7) circle (2pt) node[below right] {$x_2$};
    \fill (-0.6,-0.2) circle (2pt) node[below right] {$x_3$};
    \draw[densely dashed] (-1.1,0.0) -- (-1.1,-1.6);
    \fill (-1.1,-1.1) circle (2pt) node[below right] {$x_1$};
    
     \draw (-2.85,-4.15) node[rectangle callout, draw, rounded corners, text  = black, minimum height=2cm, minimum width = 1.5cm,callout relative  pointer={(0.4,-0.3)}] {};
    \draw[densely dashed] (-2.75,-4.9) -- (-2.75,-3.3);
    \fill (-2.80,-3.8) circle (2pt) node[below right] {$x_3$};
    \fill (-2.70,-3.8) circle (2pt) node[below left] {$x_2$};
    \draw[densely dashed] (-3.3,-4.9) -- (-3.3,-3.3);
    \fill (-3.3,-4.4) circle (2pt) node[below right] {$x_1$};

     \draw (-1,-7) node[rectangle callout, draw, rounded corners, text  = black, minimum height=2cm, minimum width = 1.5cm,callout relative  pointer={(0.4,-.3)}] {};
    \draw[densely dashed] (-1.05,-6.2) -- (-1.05,-7.7);
    \fill (-1.05,-7.2) circle (2pt) node[below right ] {$x_1$};
    \fill (-1.0,-6.7) circle (2pt) node[below left] {$x_2$};
    \fill (-1.1,-6.7) circle (2pt) node[below right] {$x_3$};

     \draw (7,-12) node[rectangle callout, draw, rounded corners, text  = black, minimum height=2cm, minimum width = 1.5cm,callout relative  pointer={(0.4,.3)}] {};
    \draw[densely dashed] (6.975,-11.5) -- (6.975,-12.5);
    \fill (7.025,-12) circle (2pt) node[below left] {$x_2$};
    \fill (6.925,-12) circle (2pt) node[below right] {$x_3$};
    \fill (6.975,-11.93) circle (2pt) node[above right] {$x_1$};

    %--------------------------------------------------------------------------------

    	\draw (\coord,-5+2*\y) -- (2,-5+\y);
    	\draw (\coord,-5+2*\y) -- (2+\o,-5+\y);
     	\draw (\coord,-5+2*\y) -- (20,-5+\y);
    	\draw (\coord,-5+2*\y) -- (20-\o,-5+\y);

    	\draw (\coord+\p, -5+2*\y) -- (2,-5+\y);
    	\draw (\coord+\p, -5+2*\y) -- (2+\o,-5+\y);
    	\draw (\coord+\p, -5+2*\y) -- (2+2*\o,-5+\y);
    	\draw (\coord+\p, -5+2*\y) -- (2+3*\o,-5+\y);

    	\draw (\coord+2*\p, -5+2*\y) -- (2+2*\o,-5+\y);
    	\draw (\coord+2*\p, -5+2*\y) -- (2+3*\o,-5+\y);
    	\draw (\coord+2*\p, -5+2*\y) -- (2+4*\o,-5+\y);
    	\draw (\coord+2*\p, -5+2*\y) -- (2+5*\o,-5+\y);

    	\draw (\coord+3*\p, -5+2*\y) -- (2+4*\o,-5+\y);
    	\draw (\coord+3*\p, -5+2*\y) -- (2+5*\o,-5+\y);
    	\draw (\coord+3*\p, -5+2*\y) -- (2+6*\o,-5+\y);
    	\draw (\coord+3*\p, -5+2*\y) -- (2+7*\o,-5+\y);

    	\draw (\coord+4*\p, -5+2*\y) -- (2+6*\o,-5+\y);
    	\draw (\coord+4*\p, -5+2*\y) -- (2+7*\o,-5+\y);
    	\draw (\coord+4*\p, -5+2*\y) -- (2+8*\o,-5+\y);
    	\draw (\coord+4*\p, -5+2*\y) -- (2+9*\o,-5+\y);

    	\draw (\coord+5*\p, -5+2*\y) -- (2+8*\o,-5+\y);
    	\draw (\coord+5*\p, -5+2*\y) -- (2+9*\o,-5+\y);
    	\draw (\coord+5*\p, -5+2*\y) -- (2+10*\o,-5+\y);
    	\draw (\coord+5*\p, -5+2*\y) -- (2+11*\o,-5+\y);

    %-----------------------------------------------------------------------------------

    	\draw (2,-5+\y) -- (0,-5);
    	\draw (2,-5+\y) -- (4,-5);
    	\draw (2,-5+\y) -- (12,-5);
    	\draw (2,-5+\y) -- (14,-5);

    	\draw (2+\o,-5+\y) -- (0,-5);
    	\draw (2+\o,-5+\y) -- (6,-5);
    	\draw (2+\o,-5+\y) -- (8,-5);
    	\draw (2+\o,-5+\y) -- (10,-5);

    	\draw (2+2*\o,-5+\y) -- (4,-5);
    	\draw (2+2*\o,-5+\y) -- (6,-5);
    	\draw (2+2*\o,-5+\y) -- (14,-5);
    	\draw (2+2*\o,-5+\y) -- (16,-5);

    	\draw (2+3*\o,-5+\y) -- (8,-5);
    	\draw (2+3*\o,-5+\y) -- (10,-5);
    	\draw (2+3*\o,-5+\y) -- (12,-5);
    	\draw (2+3*\o,-5+\y) -- (16,-5);

    	\draw (2+4*\o,-5+\y) -- (4,-5);
    	\draw (2+4*\o,-5+\y) -- (6,-5);
    	\draw (2+4*\o,-5+\y) -- (8,-5);
    	\draw (2+4*\o,-5+\y) -- (22,-5);

    	\draw (2+5*\o,-5+\y) -- (10,-5);
    	\draw (2+5*\o,-5+\y) -- (12,-5);
    	\draw (2+5*\o,-5+\y) -- (14,-5);
    	\draw (2+5*\o,-5+\y) -- (22,-5);

    	\draw (2+6*\o,-5+\y) -- (2,-5);
    	\draw (2+6*\o,-5+\y) -- (4,-5);
    	\draw (2+6*\o,-5+\y) -- (12,-5);
    	\draw (2+6*\o,-5+\y) -- (14,-5);

    	\draw (2+7*\o,-5+\y) -- (2,-5);
    	\draw (2+7*\o,-5+\y) -- (6,-5);
    	\draw (2+7*\o,-5+\y) -- (8,-5);
    	\draw (2+7*\o,-5+\y) -- (10,-5);

    	\draw (2+8*\o,-5+\y) -- (4,-5);
    	\draw (2+8*\o,-5+\y) -- (6,-5);
    	\draw (2+8*\o,-5+\y) -- (14,-5);
    	\draw (2+8*\o,-5+\y) -- (18,-5);

    	\draw (2+9*\o,-5+\y) -- (8,-5);
    	\draw (2+9*\o,-5+\y) -- (10,-5);
    	\draw (2+9*\o,-5+\y) -- (12,-5);
    	\draw (2+9*\o,-5+\y) -- (18,-5);

    	\draw (2+10*\o,-5+\y) -- (4,-5);
    	\draw (2+10*\o,-5+\y) -- (6,-5);
    	\draw (2+10*\o,-5+\y) -- (8,-5);
    	\draw (2+10*\o,-5+\y) -- (20,-5);

    	\draw (2+11*\o,-5+\y) -- (10,-5);
    	\draw (2+11*\o,-5+\y) -- (12,-5);
    	\draw (2+11*\o,-5+\y) -- (14,-5);
    	\draw (2+11*\o,-5+\y) -- (20,-5);

    %--------------------------------------------------------------------------------------------

    	\draw(0,-5) -- (2,-5-\y);
    	\draw(0,-5) -- (4,-5-\y);

    	\draw(2,-5) -- (2,-5-\y);
    	\draw(2,-5) -- (4,-5-\y);

    	\draw(4,-5) -- (2,-5-\y);
    	\draw(4,-5) -- (18,-5-\y); % 14--> 18

    	\draw(6,-5) -- (2,-5-\y);
    	\draw(6,-5) -- (14,-5-\y); % 18 --> 14

    	\draw(8,-5) -- (14,-5-\y);
    	\draw(8,-5) -- (20,-5-\y);

    	\draw(10,-5) -- (4,-5-\y);
    	\draw(10,-5) -- (20,-5-\y);

    	\draw(12,-5) -- (4,-5-\y);
    	\draw(12,-5) -- (16,-5-\y);

    	\draw(14,-5) -- (16,-5-\y);
    	\draw(14,-5) -- (18,-5-\y);

    	\draw(16,-5) -- (14,-5-\y);
    	\draw(16,-5) -- (16,-5-\y);

    	\draw(18,-5) -- (14,-5-\y);
    	\draw(18,-5) -- (16,-5-\y);

    	\draw(20,-5) -- (18,-5-\y);
    	\draw(20,-5) -- (20,-5-\y);

    	\draw(22,-5) -- (18,-5-\y);
    	\draw(22,-5) -- (20,-5-\y);

    %------------------------------------------------------------------------------------------------------

    	\draw (2,-5-\y) -- (11, -5-2*\y);
    	\draw (4,-5-\y) -- (11, -5-2*\y);
    	\draw (14,-5-\y) -- (11, -5-2*\y);
    	\draw (16,-5-\y) -- (11, -5-2*\y);
    	\draw (18,-5-\y) -- (11, -5-2*\y);
    	\draw (20,-5-\y) -- (11, -5-2*\y);

    %-----------------------------------------------------------------------------------------------------

    \foreach \x in {0,2}{
              		\fill[color = white] (\x,-5) circle(3pt);
              		\draw (\x,-5) circle(3pt);
        };

    	\foreach \x in {4,6,...,14}
              		\fill(\x,-5) circle(3pt);

    	\foreach \x in {16,18,20,22}{
                         \fill[color = white] (\x,-5) circle(3pt);
              		\draw (\x,-5) circle(3pt);
    	};

    	\foreach \x in {2,3.6363636363636363636363,5.27272727272727272727272727272727,...,20} 
              		\fill (\x, -5+\y) circle(3pt);

    	\foreach \x in {5.2727272727272727272727272727272727,7.563636363636362,...,16.72727272727272727272727272 } 
              		\fill (\x,-5+2*\y) circle(3pt);

    	\foreach \x in {2,4}{
                         \fill[color = white] (\x,-5-\y) circle(3pt);
              		\draw (\x,-5-\y) circle(3pt);
    	};

    	\foreach \x in {14,16,18,20}{
              		\fill[color = white] (\x,-5-\y) circle(3pt);
              		\draw (\x,-5-\y) circle(3pt);
    	};

    	\fill[color = white] (11,-5-2*\y) circle (3pt);
    	\draw (11,-5-2*\y) circle (3pt);

    %Beschriftung der Knoten

    	\draw (\coord, -5+2*\y) node[above left]{$1\less{1}2\less{1}3$};%{$iii$};

    	\draw (\coord+\p, -5+2*\y) node[above left]{$1\less{1}3\less{1}2$};%{$iij$};
    	\draw (\coord+2*\p, -5+2*\y) node[above left]{$3\less{1}1\less{1}2$};%{$ijj$};
    	\draw (\coord+3*\p, -5+2*\y) node[above left]{$3\less{1}2\less{1}1$};%{$jjj$};
    	\draw (\coord+4*\p, -5+2*\y) node[above left]{$2\less{1}3\less{1}1$};%{$jji$};
    	\draw (\coord+5*\p, -5+2*\y) node[above left]{$2\less{1}1\less{1}3$};%{$jii$};

    	\draw (2, -5+\y)       node[above left]{$1\less{1}2\less{2}3$};%{$ii+$};
    	\draw (2+\o, -5+\y)    node[above left]{$1\less{1}3\less{2}2$};%{$ii-$};
    	\draw (2+2*\o, -5+\y)  node[above left]{$1\less{2}3\less{1}2$};%{$i+j$};
    	\draw (2+3*\o, -5+\y)  node[above left]{$3\less{2}1\less{1}2$};%{$i-j$};
    	\draw (2+4*\o, -5+\y)  node[above left]{$3\less{1}1\less{2}2$};%{$+jj$};
    	\draw (2+5*\o, -5+\y)  node[above left]{$3\less{1}2\less{2}1$};%{$-jj$};
    	\draw (2+6*\o, -5+\y)  node[above left]{$2\less{2}3\less{1}1$};%{$jj+$};
    	\draw (2+7*\o, -5+\y)  node[above left]{$3\less{2}2\less{1}1$};%{$jj-$};
    	\draw (2+8*\o, -5+\y)  node[above left]{$2\less{1}1\less{2}3$};%{$j+i$};
    	\draw (2+9*\o, -5+\y)  node[above left]{$2\less{1}3\less{2}1$};%{$j-i$};
    	\draw (2+10*\o, -5+\y) node[above left]{$1\less{2}2\less{1}3$};%{$+ii$};
    	\draw (2+11*\o, -5+\y) node[above left]{$2\less{2}1\less{1}3$};%{$-ii$};

    	\draw (0, -5) node[below left]{$1\less{1}2\less{3}3$};%{$ii0$};
    	\draw (2, -5) node[below left]{$2\less{3}3\less{1}1$};%{$jj0$};

    	\draw (4, -5) node[below left]{$1\less{2}2\less{2}3$};%{$+++$};
    	\draw (6, -5) node[below left]{$1\less{2}3\less{2}2$};%{$++-$};
    	\draw (8, -5) node[below left]{$3\less{2}1\less{2}2$};%{$+--$};
    	\draw (10, -5) node[below left]{$3\less{2}2\less{2}1$};%{$---$};
    	\draw (12, -5) node[below left]{$2\less{2}3\less{2}1$};%{$--+$};
    	\draw (14, -5) node[below left]{$2\less{2}1\less{2}3$};%{$-++$};

    	\draw (16, -5) node[below left]{$1\less{3}3\less{1}2$};%{$i0j$};
    	\draw (18, -5) node[below left]{$2\less{1}1\less{3}3$};%{$j0i$};
    	\draw (20, -5) node[below left]{$1\less{3}2\less{1}3$};%{$0ii$};
    	\draw (22, -5) node[below left]{$3\less{1}1\less{3}2$};%{$0jj$};

    	\draw (2, -5-\y)  node[below left]{$1\less{2}2\less{3}3$};%{$++0$};
    	\draw (4, -5-\y)  node[below left]{$2\less{3}3\less{2}1$};%{$--0$};

    	\draw (14, -5-\y) node[below left]{$1\less{3}3\less{2}2$};%{$+0-$};
    	\draw (16, -5-\y) node[below left]{$2\less{2}1\less{3}3$};%{$-0+$};
    	\draw (18, -5-\y) node[below left]{$1\less{3}2\less{2}3$};%{$0++$};
    	\draw (20, -5-\y) node[below left]{$3\less{2}1\less{3}2$};%{$0--$};

    	\draw (11,-5-2*\y) node[below left]{$\hat0=1\less{3}2\less{3}3$};%{$000$};
    \end{tikzpicture} 
%\end{landscape}
\caption{\label{fig:bigposet23}The face poset $\PosetOfStratification(2,3)$}
\end{sidewaysfigure}

\end{example}
By Lemma~\ref{lemma:combstrat}, the intersection of the strata of the
Fox--Neuwirth stratification of $W_n^{\oplus d}$ with the unit sphere $S(W_n^{\oplus d})$ yields
a regular CW decomposition of this sphere of dimension $d(n-1)-1$, whose face poset
is given by $\PosetOfStratification(d,n)$.
(Here the stratum $\{0\}$ corresponds to $\hat0\in\PosetOfStratification(d,n)$; it corresponds
to the empty cell in the cellular sphere.)

The union of the arrangement is composed of all the strata with at least one entry $\less{d+1}$,
while the strata where all comparisons $\less{k}$ have $k\le d$ lie in the complement.
In particular, the link of the arrangement (its intersection with the unit sphere)
is given by a subcomplex of the cell decomposition of the unit sphere
in $W_n^{\oplus d}$ induced by the stratifiction.

\begin{example}\label{ex:cellBUsphere}
    For $d\ge1$, $n=2$  the CW decomposition of the unit sphere in $W_2^{\oplus d}\cong\R^d$
    we obtain is the minimal, centrally-symmetric cell decomposition of
    the $(d-1)$-sphere that has two vertices, two edges, two $2$-cells,
    etc.: The two $i$-cells are $C(1\less{{d-i}}2)\cap S(W_2^{\oplus d})$ and~$C(2\less{{d-i}}1)\cap S(W_2^{\oplus d})$, for $0\le i< d$.
    Both $i$-cells lie in the boundary of both $j$-cells, for $0\le i<j< d$.
\end{example}

\begin{example}
    For $d=1$, $n\ge2$ the stratification of $W_n$ is given by the real arrangement of 
    $\binom n2$ hyperplanes $x_i=x_j$ in~$W_n$. The CW decomposition of the unit sphere in
    $W_n$ has $2^n-2$ vertices, indexed by
    $(\sigma_1\less{2}\cdots\less{2}\sigma_{k-1}\less{1}\sigma_k\less{2}\cdots\less{2}\sigma_n)$ with
    $1\le k<n$, $\sigma_1<\dots<\sigma_{k-1}$ and $\sigma_k<\dots<\sigma_n$,
    and $n!$ facets corresponding to the regions of the hyperplane arrangement, 
    indexed by $(\sigma_1\less{1}\cdots\less{1}\sigma_n)$ for permutations~$\sigma\in\Symm_n$.
\end{example}

We are dealing with
a partition (“stratification”) of a Euclidean space $\R^m$ into a finite number of relative-open polyhedral cones  
such that the relative boundary of each cone $C$ (i.e.\ its boundary in the affine hull) is a union of 
(necessarily: finitely many, lower-dimensional) cones of the stratification.
Furthermore the stratification is \emph{weakly essential} in the following sense:
some of the strata may not be pointed (i.e., have $\{0\}$ as a face of the closure), but $\{0\}$ is a stratum that lies in the
boundary of all other strata, which thus are proper subsets of their affine hulls.
In this situation any selection of points $v_C$ in the relative interiors of the strata~$C$
induces a PL homeomorphism between the order complex of the face poset of the stratification 
to a star-shaped PL ball that is a neighborhood of the origin in~$\R^m$. 

\begin{example}
    Figure~\ref{fig:figure3} shows a weakly essential stratification of $\R^2$, its face poset, 
and a realization of the order complex of its face poset.
\begin{figure}[ht!]
\begin{center} 
         \medskip
         \begin{tikzpicture}
         	\coordinate (A) at (0,0);
         	\coordinate (B) at (2,0);
         	\coordinate (C) at (3,0.5);
         	\coordinate (D) at (3,-0.5);
         	\coordinate (E) at (-0.5,-1.5);
         	\coordinate (F) at (-0.5,1.5);
         	\coordinate (G) at (1.5,0.75);
         	\coordinate (H) at (2,-1);
         	\coordinate (I) at (-1,0.5);
         	\draw (-3,1.5) -- (4,-2);
         	\draw (A) -- (3,1.5);
         	\draw (A) -- (4,0);
              	\draw (A) node[below left] {$0$};
              	\draw (F) node[above] {$C$};
              	\draw (G) node [above] {$C'$};
         \end{tikzpicture}
 \qquad
 \begin{tikzpicture}
 	\coordinate (A) at (0,0);
 	\coordinate (B) at (2,0);
 	\coordinate (C) at (3,0.5);
 	\coordinate (D) at (3,-0.5);
 	\coordinate (E) at (-0.5,-1.5);
 	\coordinate (F) at (-0.5,1.5);
 	\coordinate (G) at (1.5,0.75);
 	\coordinate (H) at (2,-1);
 	\coordinate (I) at (-1,0.5);

 \filldraw[fill = black!25] (A) -- (B) -- (C) -- (A);
 \filldraw[fill = black!25] (A) -- (C) -- (G) -- (A);
 \filldraw[fill = black!25] (A) -- (B) -- (D) -- (A);
 \filldraw[fill = black!25] (A) -- (E) -- (H) -- (A);
 \filldraw[fill = black!25] (A) -- (H) -- (D) -- (A);
 \filldraw[fill = black!25] (A) -- (G) -- (F) -- (A);
 \filldraw[fill = black!25] (A) -- (F) -- (I) -- (A);
 \filldraw[fill = black!25] (A) -- (I) -- (E) -- (A); 
	\draw (-3,1.5) -- (4,-2);
	\draw (A) -- (3,1.5);
	\draw (A) -- (4,0); 
 	\fill (A) circle (2pt) node[below left] {$0$};
 	\fill (B) circle (2pt);
 	\fill (C) circle (2pt);
 	\fill (D) circle (2pt);
 	\fill (E) circle (2pt);
 	\fill (F) circle (2pt) node[above] {$v(C)$};
 	\fill (H) circle (2pt);
 	\fill (G) circle (2pt) node [above]{$v(C')$};
 	\fill (I) circle (2pt);
 \end{tikzpicture}
         \\ \medskip
 \begin{tikzpicture}
          	\fill (0,0) circle (3pt);
          	\fill (1,0) circle (3pt);
          	\fill (2,0) circle (3pt);
          	\fill (3,0) circle (3pt);

          	\fill (0,-1) circle (3pt);
          	\fill (1,-1) circle (3pt);
          	\fill (2,-1) circle (3pt);
          	\fill (3,-1) circle (3pt);

          	\fill (1.5,-2) circle (3pt);
          %-----------------------------------------------------------
          	\draw (0,0) -- (0,-1) -- (1,0) --(1,-1) -- (2,0) -- (2,-1) -- (3,0)--(3,-1) -- (0,0);
          	\draw(1.5,-2) -- (0,-1);
          	\draw(1.5,-2) -- (1,-1);
          	\draw(1.5,-2) -- (2,-1);
          	\draw(1.5,-2) -- (3,-1);
          %-----------------------------------------------------------
          	\draw (1.5,-2) node[below]{$\{ 0 \}$};
          	\draw (3,0) node[above right] {$C$};
          	\draw (3,-1) node [below right] {$C'$};
 \end{tikzpicture}
          \medskip
    \end{center} 
    \caption{\label{fig:figure3}A stratification of $\R^2$, its face poset, and a realization of its order complex}
    \end{figure}
\end{example}
  
We now implement this construction for our specific stratification of $W_n^{\oplus d}$:
 
\begin{lemma}\label{lem:coordinates}
     For $d\ge1$, $n\ge2$, in each stratum $C(\sigma,\ii)$ a relative-interior point $v(\sigma,\ii)=(x_1,\dots,x_n)\in W_n^{\oplus d}$ 
     is obtained as follows: Set
     \[
     \tilde x_{\sigma(1)}:=0,\qquad \tilde x_{\sigma(j+1)}:= \tilde x_{\sigma(j)}+e_{i_j},
     \]
     where $e_i$ denotes the $i$th standard unit vector in $\R^d$, with $e_{d+1}:=0$.  
     The point $v(\sigma,\ii)$ is the image of this $\tilde x(\sigma,\ii)$
     under the orthogonal projection map $\nu$ from $\R^{d\times n}$ to the subspace $W_n^{\oplus d}$, which 
     translates the barycenter of the configuration to the origin, given by
     $x_j:=\nu(\tilde x_j)=\tilde x_j-\frac1n(\tilde x_1+\dots+\tilde x_n)$.
     
     These points $v(\sigma,\ii)$ 
     yield the vertices of a geometric realization of the order complex $\Delta\PosetOfStratification(d,n)$ by a
     star-shaped PL neighborhood $\SimplicialFullBall(d,n)$ of the origin in~$W_n^{\oplus d}$. Its boundary is a geometric realization
     of the barycentric subdivision of the cell decomposition $\CellComplexFullSphere(d,n)$ of $S(W_n^{\oplus d})$
     that is induced by the Fox--Neuwirth stratification.  
\end{lemma}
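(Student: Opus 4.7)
The plan is to verify the three assertions of the lemma in turn. First I would check that $v(\sigma,\ii)\in C(\sigma,\ii)$, by showing that the combinatorial data of the configuration $\tilde x(\sigma,\ii)$ equals $(\sigma,\ii)$. The recursion $\tilde x_{\sigma(j+1)} = \tilde x_{\sigma(j)} + e_{i_j}$ is engineered so that consecutive points in the order $\sigma$ agree on coordinates $1,\dots,i_j-1$, and either strictly increase in coordinate $i_j$ (when $i_j\le d$) or coincide (when $i_j=d+1$). For any non-consecutive pair $\sigma(k),\sigma(\ell)$ with $k<\ell$, the difference $\tilde x_{\sigma(\ell)}-\tilde x_{\sigma(k)} = \sum_{j=k}^{\ell-1} e_{i_j}$ vanishes in coordinates below $\min\{i_j : k\le j<\ell\}$, which is precisely the condition propagated from the consecutive data by $(\sigma,\ii)$. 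Since the projection $\nu$ is a translation, it preserves all differences, so $v(\sigma,\ii)$ sits in the correct stratum; in particular $v(\hat 0)=0$.

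For the PL-realization claim I would invoke the general principle announced in the paragraph preceding the lemma. The Fox--Neuwirth stratification of $W_n^{\oplus d}$ is weakly essential: every $C(\sigma,\ii)$ is a polyhedral cone invariant under positive scaling, and the stratum $C(\hat 0)=\{0\}$ lies in the closure of all other strata. For a chain $(\sigma_0,\ii_0)<\cdots<(\sigma_k,\ii_k)$ in $\PosetOfStratification(d,n)$ the cones $C(\sigma_j,\ii_j)$ have strictly increasing dimensions and each is a proper face of $\overline{C(\sigma_k,\ii_k)}$, so $v(\sigma_j,\ii_j)$ lies in $C(\sigma_j,\ii_j)\setminus\mathrm{aff}\,C(\sigma_{j-1},\ii_{j-1})$; consequently the $v(\sigma_j,\ii_j)$ are affinely independent and their convex hull is a geometric simplex contained in $\overline{C(\sigma_k,\ii_k)}$. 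Maximal chains have length $d(n-1)$ and begin at $\hat 0$, so each top-dimensional simplex has a vertex at the origin, and their union is starshaped from $0$. This assembles into the PL-ball neighborhood $\SimplicialFullBall(d,n)$ realizing $\Delta\PosetOfStratification(d,n)$. The boundary consists of the simplices from chains avoiding $\hat 0$; such chains are precisely the face-chains of the CW decomposition of $S(W_n^{\oplus d})$ induced by the Fox--Neuwirth stratification, so the boundary is by definition the barycentric subdivision of that decomposition.

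The main obstacle is supplying enough detail behind the invocation of the general principle to be sure that this specific choice of $v(\sigma,\ii)$ really yields an \emph{embedded} simplicial complex and not just an abstract one: one must verify that chain-simplices from different chains meet only in common faces, i.e.\ that geometric intersection matches the poset-theoretic meet of the indexing chains. I would handle this inductively on $n$ and on the rank of the top cell, working inside each closed top-dimensional cone $\overline{C(\sigma_N,\ii_N)}$ and using that the restriction of $\PosetOfStratification(d,n)$ to strata below $(\sigma_N,\ii_N)$ is the face poset of a weakly essential stratification of that cone. The explicit lattice-point description $\tilde x_{\sigma(j+1)} = \tilde x_{\sigma(j)} + e_{i_j}$ keeps the affine-independence and disjointness bookkeeping tractable.
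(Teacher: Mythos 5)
Your proof is correct and follows essentially the same route as the paper. The paper gives no explicit proof of this lemma, relying instead on the general principle about weakly essential stratifications stated in the paragraph immediately preceding it; your argument does the same — verify directly (via the lexicographic construction and the fact that $\nu$ preserves column differences) that $v(\sigma,\ii)$ lands in the relative interior of $C(\sigma,\ii)$, then appeal to that principle, with a reasonable sketch of how one would fill in the embeddedness details that the paper also leaves implicit.
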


\begin{example}
    For the data $(\sigma,\ii)=(3\less{2}8\less{1}1\less{2}4\less{1}7\less{1}6\less{2}5\less{2}2)$ of the first 
    configuration in Example~\ref{ex:8points_permutation},  
    Lemma~\ref{lem:coordinates} yields the following configuration of Figure~\ref{fig:normalized28},
    which is then normalized by the map~$\nu$.
    \begin{figure}[ht!]\begin{center}
            \begin{tikzpicture}[scale=1]
            	\draw[dashed] (0,0) -- (0,1) -- (1,1)--(1,2)--(2,2)--(3,2)--(3,3)--(3,4);	
        %----------------------------------------------------------------------
    	        \fill (0,0) circle (2pt) node[below]{$\tilde x_3$};
            	\fill (0,1) circle (2pt) node[above] {$\tilde x_8$};
            	\fill (1,1) circle (2pt) node[right] {$\tilde x_1$};
            	\fill (1,2) circle (2pt) node[above] {$\tilde x_4$};
       	        \fill (2,2) circle (2pt) node[above] {$\tilde x_7$};
       	        \fill (3,2) circle (2pt) node[right] {$\tilde x_6$};
       	        \fill (3,3) circle (2pt) node[right] {$\tilde x_5$};
       	        \fill (3,4) circle (2pt) node[right] {$\tilde x_2$};
            \end{tikzpicture}
    \end{center}
    \caption{\label{fig:normalized28}The coordinates of Lemma~\ref{lem:coordinates} for the first configuration of
    Example~\ref{ex:8points_permutation}}
    \end{figure}
\end{example}

\begin{example}\label{ex:N13}
    For $n=3$, $d=1$ the star-shaped PL neighborhood $\SimplicialFullBall(1,3)$ of~$W_3\subset\R^3$ has
    $13$ vertices, some of whose coordinates according to Lemma~\ref{lem:coordinates} are 
    indicated in Figure~\ref{fig:figure4}.
\begin{figure}[ht!]
 \begin{center} 
     \begin{tikzpicture}[scale = 3.5,   decoration={markings,mark=at position 0.5 with {\arrow{triangle 60}}} , rotate = 30 ]
     \path (0,0) coordinate (origin);
     \filldraw[fill=black!25] 
        (  0:1cm)  coordinate (P0) node[anchor=west]        {~$(1\less{1}3\less{1}2) \longmapsto \nu(0,2,1)=(-1,1,0)\in W_3$} -- (30:.5) 
     -- ( 60:1cm)  coordinate (P1) node[anchor=south west]  { $(1\less{1}2\less{1}3) \longmapsto \nu(0,1,2)=(-1,0,1)\in W_3$} -- (90:.5) 
     -- (120:1cm)  coordinate (P2) -- (150:.5) 
     -- (180:1cm)  coordinate (P3) -- (210:.5)
     -- (240:1cm)  coordinate (P4) node[anchor= north]      { $(3\less{1}2\less{1}1) $} -- (270:.5) 
     -- (300:1cm)  coordinate (P5) node[anchor= west]       {~$(3\less{1}1\less{1}2) \longmapsto \nu(1,2,0)=(0,1,-1)\in W_3$} -- (330:.5) -- (0:1cm);
     
     \fill (P0) circle (.666pt);
     \fill (P1) circle (.666pt);
     \fill (P2) circle (.666pt);
     \fill (P3) circle (.666pt);
     \fill (P4) circle (.666pt);
     \fill (P5) circle (.666pt);
       \draw (0,0) -- ( 0:0.45cm);
       \draw (0:0.75cm) -- ( 0:1cm);
      \draw (0,0) -- (P1);
      \draw (0,0) -- (P2);
      \draw (0,0) -- (P3);
      \draw (0,0) -- (P4);
      \draw (0,0) -- (P5);
           
      \draw (0,0) -- ( 30:.5);
      \draw (0,0) -- ( 90:.5);
      \draw (0,0) -- (150:.5);
      \draw (0,0) -- (210:.5);  
      \draw (0,0) -- (270:.3);
      \draw (0,0) -- (330:.5);
     	
     \draw[dashed] (270:0.5) -- (270:1) node[right] {$x_1 = x_2$};
     \draw[dashed] ( 90:0.5) -- ( 90:1);
     \draw[dashed] (210:0.5) -- (210:1) node[left] {$x_2 = x_3$};
     \draw[dashed] ( 30:0.5) -- ( 30:1);
     \draw[dashed] (330:0.5) -- (330:1) node[right]{$x_1 = x_3$};
     \draw[dashed] (150:0.5) -- (150:1);   

            \fill[color = white] ( 30:.5) circle (.666pt);
            \fill[color = white] ( 90:.5) circle (.666pt);
            \fill[color = white] (150:.5) circle (.666pt);
            \fill[color = white] (210:.5) circle (.666pt);
            \fill[color = white] (270:.5) circle (.666pt);
            \fill[color = white] (330:.5) circle (.666pt);
            
            \draw ( 30:.5) circle (.666pt)node[anchor=north west] { $(1\less{1}2\less{2}3) \longmapsto \nu(0,1,1)=(-\frac23,\frac13,\frac13)\in W_3$};
            \draw ( 90:.5) circle (.666pt);
            \draw (150:.5) circle (.666pt);
            \draw (210:.5) circle (.666pt);
            \draw (270:.5) circle (.666pt)node[anchor=south ] { $(3\less{1}1\less{2}2)$};
            \draw (330:.5) circle (.666pt)node[anchor=north ] { $(1\less{2}3\less{1}2)$};
 
                \fill[color = white] (0,0) circle (.666pt);
                \draw (0,0) circle (.666pt);
     \end{tikzpicture}
 \end{center}
 \caption{\label{fig:figure4}Coordinates for the realization of $\SimplicialFullBall(1,3)$ in $W_3$ according to Lemma~\ref{lem:coordinates}}   
 \end{figure}
\end{example} 

In particular, the boundary of the star-shaped PL neighborhood $\SimplicialFullBall(d,n)$ is an $\Symm_n$-invariant PL sphere.
The link of the arrangement
is represented by an induced subcomplex of this sphere. 
The dual cell complex
of the sphere contains, as a subcomplex, a cellular model for the complement of the arrangement -- that is,
a simplicial complex that is a strong deformation retract of
the configuration space $F(\R^d,n)$. The cell complex is \emph{regular} in the sense that the
attaching maps of cells do not make identifications on the boundary; in particular, the barycentric subdivision
of any such complex is a simplicial complex, given by the order complex of the face poset
(see e.g.\ Björner et al.\ \cite[Sect.~4.7]{Z10-2}, Cooke \& Finney \cite{CookeFinney}, Munkres \cite{Munkres:AT}).

\begin{theorem}[Cell complex model for $F(\R^d,n)$]\label{thm:cell_complex}
    Let $d\ge1$ and $n\ge2$. Then $F(\R^d,n)$ contains, 
    as an equivariant strong deformation retract, a finite (and thus compact) regular CW complex $\CellComplexComplement(d,n)$
    of dimension $(d-1)(n-1)$ with $n!$ vertices, $n!$ facets, and $(n-1)n!$ ridges.
    
    The nonempty faces of $\CellComplexComplement(d,n)$ are indexed by the data of the
    form 
    \[
    (\sigma,\ii):=(\sigma_1\less{{i_1}}\sigma_2\less{{i_2}}\cdots\less{{i_{n-1}}}\sigma_n),
    \] 
    where $\sigma=\sigma_1\sigma_2\dots\sigma_n\in\Symm_n$ is a permutation,
    and $\ii=(i_1,\dots,i_{n-1})\in\{1,\dots,d\}^{n-1}$. Let $\PosetOfComplement(d,n)$
    be the set of these strings.
    
    The dimension of the cell $\check{c}(\sigma,\ii)$ associated with $(\sigma,\ii)$ 
    is $(i_1+\dots+i_{n-1})-(n-1)$.   
    
    The inclusion relation between cells $\check{c}(\sigma,\ii)$, and thus the partial order of the face poset
    $\PosetOfComplement(d,n)$, is as follows:  
    \[
     (\sigma,\ii)\preceq (\sigma',\ii')
    \]
    holds if and only if  
    \begin{itemize}
    \item[$(**)$] 
    whenever  $\ \ldots\sigma_k\ldots\less{{i_p'}}\ldots\sigma_\ell\ldots\ $ appear in this order in $(\sigma',\ii')$, then\\  
    either $\ \ldots\sigma_k   \ldots\less{{i_p }}\ldots\sigma_\ell\ldots\ $ appear in this order in $(\sigma,\ii)$ with $i_p\le i'_p$,\\
    or     $\ \ldots\sigma_\ell\ldots\less{{i_p }}\ldots\sigma_k\ldots\ $    appear in this order in $(\sigma,\ii)$ with $i_p < i'_p$.  
    \end{itemize} 
    The barycentric subdivision of the cell complex  $\CellComplexComplement(d,n)$ (that is, the order complex $\Delta\PosetOfComplement(d,n)$ 
    of its face poset) has a geometric realization in $W_n^{\oplus d}$ 
    with vertices $v(\sigma,\ii)=(x_1,\dots,x_n)\in W_n^{\oplus d}$ placed according to
    \[
     \tilde x_{\sigma(1)}:=0,\qquad \tilde x_{\sigma(j+1)}:= \tilde x_{\sigma(j)}+e_{i_j},
    \]
     where $e_i$ denotes the $i$th standard unit vector in $\R^d$, followed by
     orthogonal projection $\R^{d\times n}\rightarrow W_n^{\oplus d}$, given by
     $x_j:=\tilde x_j-\frac1n(\tilde x_1+\dots+\tilde x_n)$.
    This geometric realization in $W_n^{\oplus d}\subset\R^{d\times n}$ is an
    $\Symm_n$-equivariant strong deformation retract of~$F(\R^d,n)$.
    
    The group $\Symm_n$ acts on the poset $\PosetOfComplement(d,n)$ via $\pi\cdot (\sigma,\ii)= (\pi\sigma,\ii)$,
    and correspondingly by $\pi\cdot v(\sigma,\ii)= v(\pi\sigma,\ii)$ on the geometric realization 
    $\SimplicialComplement(d,n)$ with 
    vertex set $\{v(\sigma,\ii):(\sigma,\ii)\in\PosetOfComplement(d,n)\}$.
\end{theorem}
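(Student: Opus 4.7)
The plan is to construct $\CellComplexComplement(d,n)$ as the cell complex dual---inside the unit sphere $S(W_n^{\oplus d})$---to the Fox--Neuwirth stratification of Lemma~\ref{lemma:combstrat}, and then to read off all stated properties from that stratification together with the coordinates of Lemma~\ref{lem:coordinates}. First I would replace $F(\R^d,n)$ by $\overline F(\R^d,n)$ via centering, and then radially retract the latter onto the complement of the arrangement link inside $S(W_n^{\oplus d})$; both are standard $\Symm_n$-equivariant strong deformation retractions, since the defining subspaces all pass through the origin.

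Next I would apply the regular-CW duality for the essential polyhedral stratification of $W_n^{\oplus d}$, as developed in the more general setting of real subspace arrangements by Björner \& Ziegler. By Lemma~\ref{lemma:combstrat}, the Fox--Neuwirth stratification induces a regular CW decomposition $\CellComplexFullSphere(d,n)$ of $S(W_n^{\oplus d})$ with face poset $\PosetOfStratification(d,n)\setminus\{\hat 0\}$, in which the link of the arrangement is the induced subcomplex on those cells with some $i_j=d+1$. Taking the dual cell complex on the sphere and restricting to the cells dual to strata in the complement (those with all $i_j\in\{1,\dots,d\}$) defines an $\Symm_n$-equivariant regular CW complex $\CellComplexComplement(d,n)$ that is a strong deformation retract of the link complement, hence of $F(\R^d,n)$. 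Its cells are then indexed by $\PosetOfComplement(d,n)$; a stratum of dimension $(d+1)(n-1)-(i_1+\dots+i_{n-1})$ in $W_n^{\oplus d}$ yields a sphere cell of dimension one less, whose dual cell has dimension $d(n-1)-1-\bigl[(d+1)(n-1)-(i_1+\dots+i_{n-1})-1\bigr]=(i_1+\dots+i_{n-1})-(n-1)$. This yields the asserted cell counts: $n!$ vertices ($\ii=(1,\dots,1)$), $n!$ facets ($\ii=(d,\dots,d)$), and $(n-1)\cdot n!$ ridges (exactly one $i_j$ equal to $d-1$). Cellular duality reverses the face order, so $(\sigma,\ii)\preceq(\sigma',\ii')$ in $\PosetOfComplement(d,n)$ translates into $(\sigma',\ii')\ge(\sigma,\ii)$ in $\PosetOfStratification(d,n)$, which by Lemma~\ref{lemma:combstrat} is precisely the condition $(**)$ stated in the theorem.

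For the geometric realization I would exploit that the barycentric subdivision of the dual cell complex coincides with that of the primal, both being the order complex of the common face poset. Lemma~\ref{lem:coordinates} places vertices $v(\sigma,\ii)$ in the relative interiors of the strata and produces an $\Symm_n$-equivariant PL realization $\SimplicialFullBall(d,n)$ of $\Delta\PosetOfStratification(d,n)$ as a starshaped neighborhood of the origin in $W_n^{\oplus d}$. Restricting to chains inside $\PosetOfComplement(d,n)$ yields $\SimplicialComplement(d,n)$ as the induced subcomplex on the vertices with $\ii\in\{1,\dots,d\}^{n-1}$, and its underlying space is $|\CellComplexComplement(d,n)|$. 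The equivariance $\pi\cdot v(\sigma,\ii)=v(\pi\sigma,\ii)$ is immediate from the coordinate formula, since relabelling the $n$ points by $\pi$ replaces $\sigma$ by $\pi\sigma$ while leaving the indices $i_j$ untouched. The main obstacle I expect is giving the cellular, $\Symm_n$-equivariant deformation retraction cleanly---this is precisely the content of the Björner--Ziegler PL-duality construction specialized to our weakly essential stratification, together with the verification that the specific coordinates of Lemma~\ref{lem:coordinates} assemble on the nose into a PL realization of the associated order complex.
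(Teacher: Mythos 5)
Your proposal is correct and follows essentially the same route as the paper's proof: orthogonal projection to $\overline F(\R^d,n)$, radial retraction onto the Fox--Neuwirth PL sphere $\partial\SimplicialFullBall(d,n)$, passing to the dual cell decomposition of the sphere (the paper cites Munkres \S64), and then invoking the Retraction Lemma of Björner et al.\ and Munkres (your ``Björner--Ziegler PL-duality construction'') to strong-deformation-retract the link complement onto the subcomplex of dual cells with all $i_j\in\{1,\dots,d\}$. The dimension computation, cell counts, order reversal via condition $(**)$, the geometric realization via Lemma~\ref{lem:coordinates}, and the $\Symm_n$-equivariance of the retraction are all handled exactly as in the paper.
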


\begin{proof}
    Orthogonal projection $\Symm_n$-equivariantly retracts $F(\R^d,n)$ to $\overline F(\R^d,n)\subset W_n^{\oplus d}$,
    and further radial projection $\Symm_n$-equivariantly retracts this to a subset of 
    the boundary of the star-shaped neigborhood of the origin, $\partial\,\SimplicialFullBall(d,n)$,
    which is a simplicial realization of the cell decomposition of
    the $(d(n-1)-1)$-dimensional sphere $S(W_n^{\oplus d})$ given by the Fox--Neuwirth stratification.
    
    The same maps identify the link of the arrangement (that is, the intersection of its union with the 
	unit sphere) with the induced subcomplex of $\partial\,\SimplicialFullBall(d,n)$
    on the vertices $v(\sigma,\ii)$ that have some index $i_j=d+1$.

    As the cell decomposition of the $(d(n-1)-1)$-sphere $S(W_n^{\oplus d})$ is PL, 
    we can pass to the Poincaré--Alexander dual cell decomposition.  
    Thus for every vertex in the sphere $S(W_n^{\oplus d})$, there is a corresponding
    facet in the dual cell decomposition, which is given by its star in the
    barycentric subdivision of the cell decomposition. More generally, any nonempty cell $C(\sigma,\ii)$
    of dimension $(d+1)(n-1)-(i_1+\dots+i_n)-1$ has an associated dual cell
    $\check{c}(\sigma,\ii)$ of dimension $(d(n-1)-1)-((d+1)(n-1)-(i_1+\dots+i_{n-1})-1)=(i_1+\dots+i_{n-1})-(n-1)$,
    and the inclusion relation is just the opposite of the one in the primal complex,
    as described in Lemma~\ref{lemma:combstrat}.
    (Compare to Munkres \cite[\S\,64]{Munkres:AT}.)
    
    According to the Retraction Lemma \cite[Lemma~4.7.27]{Z10-2} \cite[Lemma~70.1]{Munkres:AT},
    the complement admits a strong deformation retraction to the subcomplex whose cells
    are indexed by all pairs $(\sigma,\ii)$ with all indices $i_j\neq d+1$.
    (This retraction is easy to describe in barycentric coordinates in the
    barycentric subdivision of the cell complex, that is, on the order complex of the face poset.
    The $\Symm_n$-action restricts to the subcomplex, and the retraction is canonical, and thus
    $\Symm_n$-equivariant.) 
    
    So we obtain a strong deformation retract of $F(\R^d,n)$ that is a cell complex with cells indexed
    by $(\sigma,\ii)$ with all indices $i_j\neq0$.
    The barycentric subdivision of this cell complex is realized as a simplicial complex in $W_n^{\oplus d}$
    as the induced subcomplex of $\partial N(d,n)$ on the vertices $v(\sigma,\ii)$ with all indices $i_j\neq d+1$.
    Thus also the $\Symm_n$-action restricts to this simplicial model for the complement.
\end{proof}

\begin{example}
    For $n=2$ the cell complex $\CellComplexComplement(d,2)$ turns out to be the centrally-symmetric  
    cell decomposition of $S^{d-1}$ whose $(i-1)$-cells are given by $\check{c}(1\less{i}2)$ and $\check{c}(2\less{i}1)$
    for $1\le i\le d$. (Compare to Example~\ref{ex:cellBUsphere}.)
\end{example}

\begin{example}
    For $d=1$ the configuration space $F(\R,n)$ is the complement of a real hyperplane arrangement in $W_n$.
    The cell complex $\CellComplexComplement(1,n)$ is $0$-dimensional, given by the $n!$ vertices
    $v(\sigma_1\less{1}\sigma_2\cdots\less{1}\sigma_n)\in W_n$, one in each chamber of the arrangmenent.
    (Compare to Example~\ref{ex:N13}.)
\end{example}

\begin{example}
    For $d=2$ the configuration space $F(\R^2,n)=F(\C,n)$ may be viewed as the complement of a 
    complex hyperplane arrangement, known as the braid arrangement.
    This is the particular situation studied by Arnol'd \cite{Arnold69}, Deligne \cite{Deligne72}, and many others.
    The cell complex that we obtain is a particularly interesting instance of the cell complex constructed
    by Salvetti \cite{Salvetti} for the complements of complexified hyperplane arrangements (see also \cite{BjornerZiegler-combstrat}).
    
    In this case we may simplify our notation a bit: The non-empty cells of the complex are indexed by
    $(\sigma_1\less{{i_1}}\cdots\less{{i_{n-1}}}\sigma_n)$ with $i_j\in\{1,2\}$. Thus we can identify our cells uniquely
    if we just write a vertical bar instead of “$\less{1}$”, no bar for “$\less{2}$”. The inclusion of a cell into the boundary
    of a higher-dimensional cell is then represented in the partial order by 
    (repeatedly) “removing a bar, and shuffling the two blocks that were separated by the bar”. 
    
    For example, for $n=3$ we obtain the face poset $\PosetOfComplement(2,3)$ displayed in 
    Figure~\ref{fig:complement_poset}. It is obtained by taking the elements
    of the partial order $\PosetOfStratification(2,3)$ displayed in Figure~\ref{fig:bigposet23} that are marked by black dots, 
    with labels simplified, and the partial order reversed.

\begin{figure}[ht!]
\begin{center}
    \medskip
\begin{tikzpicture}[scale=.8, rotate=180]
	\def \y{2.8}
	\def \o{1.636363636363636363636}
	\def \p{2.290909090909090909090}
    \def \coord{5.2727272727272727272727272727}

%--------------------------------------------------------------------------------
% top layer of edges
	\draw (\coord,-5+2*\y) -- (2,-5+\y);
	\draw (\coord,-5+2*\y) -- (2+\o,-5+\y);
 	\draw (\coord,-5+2*\y) -- (20,-5+\y);
	\draw (\coord,-5+2*\y) -- (20-\o,-5+\y);

	\draw (\coord+\p, -5+2*\y) -- (2,-5+\y);
	\draw (\coord+\p, -5+2*\y) -- (2+\o,-5+\y);
	\draw (\coord+\p, -5+2*\y) -- (2+2*\o,-5+\y);
	\draw (\coord+\p, -5+2*\y) -- (2+3*\o,-5+\y);

	\draw (\coord+2*\p, -5+2*\y) -- (2+2*\o,-5+\y);
	\draw (\coord+2*\p, -5+2*\y) -- (2+3*\o,-5+\y);
	\draw (\coord+2*\p, -5+2*\y) -- (2+4*\o,-5+\y);
	\draw (\coord+2*\p, -5+2*\y) -- (2+5*\o,-5+\y);

	\draw (\coord+3*\p, -5+2*\y) -- (2+4*\o,-5+\y);
	\draw (\coord+3*\p, -5+2*\y) -- (2+5*\o,-5+\y);
	\draw (\coord+3*\p, -5+2*\y) -- (2+6*\o,-5+\y);
	\draw (\coord+3*\p, -5+2*\y) -- (2+7*\o,-5+\y);

	\draw (\coord+4*\p, -5+2*\y) -- (2+6*\o,-5+\y);
	\draw (\coord+4*\p, -5+2*\y) -- (2+7*\o,-5+\y);
	\draw (\coord+4*\p, -5+2*\y) -- (2+8*\o,-5+\y);
	\draw (\coord+4*\p, -5+2*\y) -- (2+9*\o,-5+\y);

	\draw (\coord+5*\p, -5+2*\y) -- (2+8*\o,-5+\y);
	\draw (\coord+5*\p, -5+2*\y) -- (2+9*\o,-5+\y);
	\draw (\coord+5*\p, -5+2*\y) -- (2+10*\o,-5+\y);
	\draw (\coord+5*\p, -5+2*\y) -- (2+11*\o,-5+\y);

%-----------------------------------------------------------------------------------
% next layer of edges     % 
    % \draw (2,-5+\y) -- (0,-5);
    \draw (2,-5+\y) -- (4,-5);
    \draw (2,-5+\y) -- (12,-5);
    \draw (2,-5+\y) -- (14,-5);

	%\draw (2+\o,-5+\y) -- (0,-5);
	\draw (2+\o,-5+\y) -- (6,-5);
	\draw (2+\o,-5+\y) -- (8,-5);
	\draw (2+\o,-5+\y) -- (10,-5);

	\draw (2+2*\o,-5+\y) -- (4,-5);
	\draw (2+2*\o,-5+\y) -- (6,-5);
	\draw (2+2*\o,-5+\y) -- (14,-5);
	%\draw (2+2*\o,-5+\y) -- (16,-5);

	\draw (2+3*\o,-5+\y) -- (8,-5);
	\draw (2+3*\o,-5+\y) -- (10,-5);
	\draw (2+3*\o,-5+\y) -- (12,-5);
	%\draw (2+3*\o,-5+\y) -- (16,-5);

	\draw (2+4*\o,-5+\y) -- (4,-5);
	\draw (2+4*\o,-5+\y) -- (6,-5);
	\draw (2+4*\o,-5+\y) -- (8,-5);
	%\draw (2+4*\o,-5+\y) -- (22,-5);

	\draw (2+5*\o,-5+\y) -- (10,-5);
	\draw (2+5*\o,-5+\y) -- (12,-5);
	\draw (2+5*\o,-5+\y) -- (14,-5);
	%\draw (2+5*\o,-5+\y) -- (22,-5);

	%\draw (2+6*\o,-5+\y) -- (2,-5);
	\draw (2+6*\o,-5+\y) -- (4,-5);
	\draw (2+6*\o,-5+\y) -- (12,-5);
	\draw (2+6*\o,-5+\y) -- (14,-5);

	%\draw (2+7*\o,-5+\y) -- (2,-5);
	\draw (2+7*\o,-5+\y) -- (6,-5);
	\draw (2+7*\o,-5+\y) -- (8,-5);
	\draw (2+7*\o,-5+\y) -- (10,-5);

	\draw (2+8*\o,-5+\y) -- (4,-5);
	\draw (2+8*\o,-5+\y) -- (6,-5);
	\draw (2+8*\o,-5+\y) -- (14,-5);
	%\draw (2+8*\o,-5+\y) -- (18,-5);

	\draw (2+9*\o,-5+\y) -- (8,-5);
	\draw (2+9*\o,-5+\y) -- (10,-5);
	\draw (2+9*\o,-5+\y) -- (12,-5);
	%\draw (2+9*\o,-5+\y) -- (18,-5);

	\draw (2+10*\o,-5+\y) -- (4,-5);
	\draw (2+10*\o,-5+\y) -- (6,-5);
	\draw (2+10*\o,-5+\y) -- (8,-5);
	%\draw (2+10*\o,-5+\y) -- (20,-5);

	\draw (2+11*\o,-5+\y) -- (10,-5);
	\draw (2+11*\o,-5+\y) -- (12,-5);
	\draw (2+11*\o,-5+\y) -- (14,-5);
	%\draw (2+11*\o,-5+\y) -- (20,-5);
 
	\foreach \x in {4,6,...,14}
          		\fill(\x,-5) circle(3pt);
 
	\foreach \x in {2,3.6363636363636363636363,5.27272727272727272727272727272727,...,20} 
          		\fill (\x, -5+\y) circle(3pt);

	\foreach \x in {5.2727272727272727272727272727272727,7.563636363636362,...,16.72727272727272727272727272 } 
          		\fill (\x,-5+2*\y) circle(3pt);
%Beschriftung der Knoten
	\draw (\coord,      -5+2*\y) node[below]{$1|2|3$};%{$iii$};
	\draw (\coord+\p,   -5+2*\y) node[below]{$1|3|2$};%{$iij$};
	\draw (\coord+2*\p, -5+2*\y) node[below]{$3|1|2$};%{$ijj$};
	\draw (\coord+3*\p, -5+2*\y) node[below]{$3|2|1$};%{$jjj$};
	\draw (\coord+4*\p, -5+2*\y) node[below]{$2|3|1$};%{$jji$};
	\draw (\coord+5*\p, -5+2*\y) node[below]{$2|1|3$};%{$jii$};

	\draw (2, -5+\y)       node[below left ]{$1|2 3$};%{$ii+$};
	\draw (2+\o, -5+\y)    node[below left ]{$1|3 2$};%{$ii-$};
	\draw (2+2*\o, -5+\y)  node[below left ]{$1 3|2$};%{$i+j$};
	\draw (2+3*\o, -5+\y)  node[below left ]{$3 1|2$};%{$i-j$};
	\draw (2+4*\o, -5+\y)  node[below left ]{$3|1 2$};%{$+jj$};
	\draw (2+5*\o, -5+\y)  node[below left ]{$3|2 1$};%{$-jj$};
	\draw (2+6*\o, -5+\y)  node[below left ]{$2 3|1$};%{$jj+$};
	\draw (2+7*\o, -5+\y)  node[below left ]{$3 2|1$};%{$jj-$};
	\draw (2+8*\o, -5+\y)  node[below left ]{$2|1 3$};%{$j+i$};
	\draw (2+9*\o, -5+\y)  node[below left ]{$2|3 1$};%{$j-i$};
	\draw (2+10*\o, -5+\y) node[below left ]{$1 2|3$};%{$+ii$};
	\draw (2+11*\o, -5+\y) node[below left ]{$2 1|3$};%{$-ii$};
	\draw (4, -5) node[above ]{$1 2 3$};%{$+++$};
	\draw (6, -5) node[above ]{$1 3 2$};%{$++-$};
	\draw (8, -5) node[above ]{$3 1 2$};%{$+--$};
	\draw (10, -5) node[above ]{$3 2 1$};%{$---$};
	\draw (12, -5) node[above ]{$2 3 1$};%{$--+$};
	\draw (14, -5) node[above ]{$2 1 3$};%{$-++$};
\end{tikzpicture}
\medskip
\end{center}
\caption{\label{fig:complement_poset}The poset $\PosetOfComplement(2,3)$}
\end{figure}

\noindent
We see that the cell complex $\CellComplexComplement(2,n)$ is particularly nice:
\begin{itemize}
    \item The cell complex $\CellComplexComplement(2,n)$ is a regular cell complex of dimension $n-1$.
    \item It has $n!$ facets, given by permutations $\sigma_1\sigma_2\ldots\sigma_n$,
    and $n!$ vertices, given by the barred permutations $\sigma_1|\sigma_2|\dots|\sigma_n$.
    \item It has $\binom{n-1}{i-1}n!$ cells of dimension $i$, given by permutations with $n-i-1$ bars.
    \item The order relation is generated by the operation of “remove a bar, and merge the two adjacent blocks separated
    by the bar by a shuffle”.
    \item The maximal cells have the combinatorial structure of an $(n-1)$-dimensional permutahedron.
    (Thus all cells have the combinatorial structure of simple polytopes, namely of products of permutahedra.)
\end{itemize}
Figure \ref{fig:cellsofcomplex23} indicates the structure of the cell complex $\CellComplexComplement(2,3)$:
This is a regular cell complex with $3!=6$ vertices, $2\cdot3!=12$ edges, and
$3!=6$ $2$-cells, which are hexagons. The figure displays the six $2$-cells shaded in separate drawings;
for example, the $2$-cell
$\check{c}(123)$ is bounded by the six edges 
$\check{c}(1|23),\check{c}(13|2),\check{c}(3|12),\check{c}(23|1),\check{c}(2|13),\check{c}(12|3)$.
In the complex $\CellComplexComplement(2,3)$ each edge is contained in three of the six hexagon $2$-cells.
The six edges in the boundary of each $2$-cell lie in two different orbits of the group $\Symm_3$;
for the $2$-cell $\check{c}(123)$ displayed in Figure~\ref{fig:cellsofcomplex23}, the three edges
$\check{c}(1|23),\check{c}(3|12),\check{c}(2|13)$ lie in one $\Symm_3$-orbit, while
$\check{c}(13|2),\check{c}(23|1),\check{c}(12|3)$ lie in the other one.

\begin{figure}[hp]
      \begin{tikzpicture}[scale = 3, rotate = 30  ]
        \path (0,0) coordinate (origin);
        \path (0:1cm) coordinate (P0);
        \path (1*60:1cm) coordinate (P1);
        \path (2*60:1cm) coordinate (P2);
        \path (3*60:1cm) coordinate (P3);
        \path (4*60:1cm) coordinate (P4);
        \path (5*60:1cm) coordinate (P5);

        \fill (P0) circle (0.5pt);
        \fill (P1) circle (0.5pt);
        \fill (P2) circle (0.5pt);
        \fill (P3) circle (0.5pt);
        \fill (P4) circle (0.5pt);
        \fill (P5) circle (0.5pt);
    \fill [color=black!30] (0:1) arc (240:180:1) -- (60:1) arc (300:240:1) -- (120:1) arc (360:300:1) -- (180:1) arc (420:360:1) -- (240:1) arc (480:420:1) -- (300:1) arc (540:480:1); 
    \draw [line width=1.3] (0:1) arc (240:180:1) -- (60:1) arc (300:240:1) -- (120:1) arc (360:300:1) -- (180:1) arc (420:360:1) -- (240:1) arc (480:420:1) -- (300:1) arc (540:480:1); 

        \foreach \x in{0,1,2,3,4,5}
        	\draw[ decoration={markings,mark=at position 0.5 with {\arrow[line width = 0.8mm]{stealth}}},postaction = {decorate}] (\x*60:1) arc (\x*60:60+\x*60:1) ;

            \foreach \x in{0,1,2,3,4,5}{
    	\draw[ color = black!30,decoration={markings,mark=at position 1 with {\arrow[line width = 0.8mm, color = black]{stealth}}},postaction = {decorate}] (\x*60:1) arc   (\x*60+240:210+\x*60:1) ;
          \draw (\x*60 :1) arc (\x*60+240:180+\x*60:1) ;
   };

        \draw (P1) node[above] {$1|2|3$};
        \draw (P2) node[left] {$2|1|3$};
        \draw (P3) node[left] {$2|3|1$};
        \draw (P4) node[below] {$3|2|1$};
        \draw (P5) node[right] {$3|1|2$};
        \draw (P0) node[right] {$1|3|2$};

        \draw (30:1cm) node[above right] {$1|32$};
        \draw[xshift = -14, yshift = -7.5] (30:1cm) node[above right] {$1|23$};

        \draw (90:1cm) node[above  left] {$21|3$};

        \draw[xshift = -6, yshift = -12] (90:1cm) node[above right] {$12|3$};

        \draw (150:1cm) node[above left] {$2|31$};
        \draw[xshift = 7, yshift = -4] (150:1cm) node[above right] {$2|13$};

        \draw (210:1cm) node[below left] {$32|1$};
        \draw[xshift = 4, yshift = 6] (210:1cm) node[above right] {$23|1$};

        \draw (270:1cm) node[below right] {$3|21$};
        \draw[xshift =- 4, yshift = 10] (270:1cm) node[above right] {$3|12$};

        \draw (330:1cm) node[ right] {$31|2$};
        \draw[xshift =- 15, yshift = 5] (330:1cm) node[above right] {$13|2$};

        \draw (0,0) node {$123$};
        \draw[decoration={markings,mark=at position1 with {\arrow[line width = 0.5mm, rotate = -6]{stealth}}}, postaction={decorate}] (180:0.13) arc  (180:450:0.13);

        \end{tikzpicture}
        \quad
      \begin{tikzpicture}[scale = 3, rotate = 30  ]
        \path (0,0) coordinate (origin);
        \path (0:1cm) coordinate (P0);
        \path (1*60:1cm) coordinate (P1);
        \path (2*60:1cm) coordinate (P2);
        \path (3*60:1cm) coordinate (P3);
        \path (4*60:1cm) coordinate (P4);
        \path (5*60:1cm) coordinate (P5);

        \fill (P0) circle (0.5pt);
        \fill (P1) circle (0.5pt);
        \fill (P2) circle (0.5pt);
        \fill (P3) circle (0.5pt);
        \fill (P4) circle (0.5pt);
        \fill (P5) circle (0.5pt);
    \fill [color=black!30] (0:1) arc (0:60:1) -- (60:1) arc (300:240:1) -- (120:1) arc (360:300:1) -- (180:1) arc (180:240:1) -- (240:1) arc (480:420:1) -- (300:1) arc (540:480:1); 
    \draw[line width = 1.3] (0:1) arc (0:60:1) -- (60:1) arc (300:240:1) -- (120:1) arc (360:300:1) -- (180:1) arc (180:240:1) -- (240:1) arc (480:420:1) -- (300:1) arc (540:480:1); 

        \foreach \x in{0,1,2,3,4,5}
        	\draw[ decoration={markings,mark=at position 0.5 with {\arrow[line width = 0.8mm]{stealth}}},postaction = {decorate}] (\x*60:1) arc (\x*60:60+\x*60:1) ;

          \foreach \x in{0,1,2,3,4,5}{
    	\draw[ color = black!30,decoration={markings,mark=at position 1 with {\arrow[line width = 0.8mm, color = black]{stealth}}},postaction = {decorate}] (\x*60:1) arc   (\x*60+240:210+\x*60:1) ;
          \draw (\x*60 :1) arc (\x*60+240:180+\x*60:1) ;
   };

        \draw (P1) node[above] {$1|2|3$};
        \draw (P2) node[left] {$2|1|3$};
        \draw (P3) node[left] {$2|3|1$};
        \draw (P4) node[below] {$3|2|1$};
        \draw (P5) node[right] {$3|1|2$};
        \draw (P0) node[right] {$1|3|2$};

        \draw (30:1cm) node[above right] {$1|32$};
        \draw[xshift = -14, yshift = -7.5] (30:1cm) node[above right] {$1|23$};

        \draw (90:1cm) node[above  left] {$21|3$};

        \draw[xshift = -6, yshift = -12] (90:1cm) node[above right] {$12|3$};

        \draw (150:1cm) node[above left] {$2|31$};
        \draw[xshift = 7, yshift = -4] (150:1cm) node[above right] {$2|13$};

        \draw (210:1cm) node[below left] {$32|1$};
        \draw[xshift = 4, yshift = 6] (210:1cm) node[above right] {$23|1$};

        \draw (270:1cm) node[below right] {$3|21$};
        \draw[xshift =- 4, yshift = 10] (270:1cm) node[above right] {$3|12$};

        \draw (330:1cm) node[ right] {$31|2$};
        \draw[xshift =- 15, yshift = 5] (330:1cm) node[above right] {$13|2$};

        \draw (0,0) node {$132$};
        \draw[decoration={markings,mark=at position1 with {\arrow[line width = 0.5mm, rotate = -6 ]{stealth}}}, postaction={decorate}] (180:0.13) arc  (180:450:0.13);
\end{tikzpicture} 
    
  \begin{tikzpicture}[scale = 3, rotate = 30  ]
    \path (0,0) coordinate (origin);
    \path (0:1cm) coordinate (P0);
    \path (1*60:1cm) coordinate (P1);
    \path (2*60:1cm) coordinate (P2);
    \path (3*60:1cm) coordinate (P3);
    \path (4*60:1cm) coordinate (P4);
    \path (5*60:1cm) coordinate (P5);

    \fill (P0) circle (0.5pt);
    \fill (P1) circle (0.5pt);
    \fill (P2) circle (0.5pt);
    \fill (P3) circle (0.5pt);
    \fill (P4) circle (0.5pt);
    \fill (P5) circle (0.5pt);
\fill [color=black!30] (0:1) arc (240:180:1) -- (60:1) arc (60:120:1) -- (120:1) arc (360:300:1) -- (180:1) arc (420:360:1) -- (240:1) arc (240:300:1) -- (300:1) arc (540:480:1); 
\draw[line width = 1.3] (0:1) arc (240:180:1) -- (60:1) arc (60:120:1) -- (120:1) arc (360:300:1) -- (180:1) arc (420:360:1) -- (240:1) arc (240:300:1) -- (300:1) arc (540:480:1); 
    \foreach \x in{0,1,2,3,4,5}
    	\draw[ decoration={markings,mark=at position 0.5 with {\arrow[line width = 0.8mm]{stealth}}},postaction = {decorate}] (\x*60:1) arc (\x*60:60+\x*60:1) ;

      \foreach \x in{0,1,2,3,4,5}{
    	\draw[ color = black!30,decoration={markings,mark=at position 1 with {\arrow[line width = 0.8mm, color = black]{stealth}}},postaction = {decorate}] (\x*60:1) arc   (\x*60+240:210+\x*60:1) ;
          \draw (\x*60 :1) arc (\x*60+240:180+\x*60:1) ;
   };

    \draw (P1) node[above] {$1|2|3$};
    \draw (P2) node[left] {$2|1|3$};
    \draw (P3) node[left] {$2|3|1$};
    \draw (P4) node[below] {$3|2|1$};
    \draw (P5) node[right] {$3|1|2$};
    \draw (P0) node[right] {$1|3|2$};

    \draw (30:1cm) node[above right] {$1|32$};
    \draw[xshift = -14, yshift = -7.5] (30:1cm) node[above right] {$1|23$};

    \draw (90:1cm) node[above  left] {$21|3$};

    \draw[xshift = -6, yshift = -12] (90:1cm) node[above right] {$12|3$};

    \draw (150:1cm) node[above left] {$2|31$};
    \draw[xshift = 7, yshift = -4] (150:1cm) node[above right] {$2|13$};

    \draw (210:1cm) node[below left] {$32|1$};
    \draw[xshift = 4, yshift = 6] (210:1cm) node[above right] {$23|1$};

    \draw (270:1cm) node[below right] {$3|21$};
    \draw[xshift =- 4, yshift = 10] (270:1cm) node[above right] {$3|12$};

    \draw (330:1cm) node[ right] {$31|2$};
    \draw[xshift =- 15, yshift = 5] (330:1cm) node[above right] {$13|2$};

    \draw (0,0) node {$213$};
    \draw[decoration={markings,mark=at position1 with {\arrow[line width = 0.5mm, rotate = -6]{stealth}}}, postaction={decorate}] (180:0.13) arc  (180:450:0.13);

    \end{tikzpicture}
    \quad
      \begin{tikzpicture}[scale = 3, rotate = 30  ]
        \path (0,0) coordinate (origin);
        \path (0:1cm) coordinate (P0);
        \path (1*60:1cm) coordinate (P1);
        \path (2*60:1cm) coordinate (P2);
        \path (3*60:1cm) coordinate (P3);
        \path (4*60:1cm) coordinate (P4);
        \path (5*60:1cm) coordinate (P5);

        \fill (P0) circle (0.5pt);
        \fill (P1) circle (0.5pt);
        \fill (P2) circle (0.5pt);
        \fill (P3) circle (0.5pt);
        \fill (P4) circle (0.5pt);
        \fill (P5) circle (0.5pt);
    \fill [color=black!30] (0:1) arc (240:180:1) -- (60:1) arc (60:120:1) -- (120:1) arc (120:180:1) -- (180:1) arc (420:360:1) -- (240:1) arc (240:300:1) -- (300:1) arc (300:360:1); 
    \draw[line width = 1.3] (0:1) arc (240:180:1) -- (60:1) arc (60:120:1) -- (120:1) arc (120:180:1) -- (180:1) arc (420:360:1) -- (240:1) arc (240:300:1) -- (300:1) arc (300:360:1); 
        \foreach \x in{0,1,2,3,4,5}
        	\draw[ decoration={markings,mark=at position 0.5 with {\arrow[line width = 0.8mm]{stealth}}},postaction = {decorate}] (\x*60:1) arc (\x*60:60+\x*60:1) ;

    \foreach \x in{0,1,2,3,4,5}{
    	\draw[ color = black!30,decoration={markings,mark=at position 1 with {\arrow[line width = 0.8mm, color = black]{stealth}}},postaction = {decorate}] (\x*60:1) arc   (\x*60+240:210+\x*60:1) ;
          \draw (\x*60 :1) arc (\x*60+240:180+\x*60:1) ;
   };

        \draw (P1) node[above] {$1|2|3$};
        \draw (P2) node[left] {$2|1|3$};
        \draw (P3) node[left] {$2|3|1$};
        \draw (P4) node[below] {$3|2|1$};
        \draw (P5) node[right] {$3|1|2$};
        \draw (P0) node[right] {$1|3|2$};

        \draw (30:1cm) node[above right] {$1|32$};
        \draw[xshift = -14, yshift = -7.5] (30:1cm) node[above right] {$1|23$};

        \draw (90:1cm) node[above  left] {$21|3$};

        \draw[xshift = -6, yshift = -12] (90:1cm) node[above right] {$12|3$};

        \draw (150:1cm) node[above left] {$2|31$};
        \draw[xshift = 7, yshift = -4] (150:1cm) node[above right] {$2|13$};

        \draw (210:1cm) node[below left] {$32|1$};
        \draw[xshift = 4, yshift = 6] (210:1cm) node[above right] {$23|1$};

        \draw (270:1cm) node[below right] {$3|21$};
        \draw[xshift =- 4, yshift = 10] (270:1cm) node[above right] {$3|12$};

        \draw (330:1cm) node[ right] {$31|2$};
        \draw[xshift =- 15, yshift = 5] (330:1cm) node[above right] {$13|2$};

        \draw (0,0) node {$231$};
        \draw[decoration={markings,mark=at position1 with {\arrow[line width = 0.5mm, rotate = -6]{stealth}}}, postaction={decorate}] (180:0.13) arc  (180:450:0.13);
        \end{tikzpicture}
    
          \begin{tikzpicture}[scale = 3.17, rotate = 30  ]
            \path (0,0) coordinate (origin);
            \path (0:1cm) coordinate (P0);
            \path (1*60:1cm) coordinate (P1);
            \path (2*60:1cm) coordinate (P2);
            \path (3*60:1cm) coordinate (P3);
            \path (4*60:1cm) coordinate (P4);
            \path (5*60:1cm) coordinate (P5);

            \fill (P0) circle (0.5pt);
            \fill (P1) circle (0.5pt);
            \fill (P2) circle (0.5pt);
            \fill (P3) circle (0.5pt);
            \fill (P4) circle (0.5pt);
            \fill (P5) circle (0.5pt);
        \fill [color=black!30] (0:1) arc (0:60:1) -- (60:1) arc (300:240:1) -- (120:1) arc (120:180:1) -- (180:1) arc (180:240:1) -- (240:1) arc (480:420:1) -- (300:1) arc (300:360:1); 
        \draw[line width = 1.3]  (0:1) arc (0:60:1) -- (60:1) arc (300:240:1) -- (120:1) arc (120:180:1) -- (180:1) arc (180:240:1) -- (240:1) arc (480:420:1) -- (300:1) arc (300:360:1); 

            \foreach \x in{0,1,2,3,4,5}
            	\draw[ decoration={markings,mark=at position 0.5 with {\arrow[line width = 0.8mm]{stealth}}},postaction = {decorate}] (\x*60:1) arc (\x*60:60+\x*60:1) ;

    \foreach \x in{0,1,2,3,4,5}{
    	\draw[ color = black!30,decoration={markings,mark=at position 1 with {\arrow[line width = 0.8mm, color = black]{stealth}}},postaction = {decorate}] (\x*60:1) arc   (\x*60+240:210+\x*60:1) ;
          \draw (\x*60 :1) arc (\x*60+240:180+\x*60:1) ;
   };

            \draw (P1) node[above] {$1|2|3$};
            \draw (P2) node[left] {$2|1|3$};
            \draw (P3) node[left] {$2|3|1$};
            \draw (P4) node[below] {$3|2|1$};
            \draw (P5) node[right] {$3|1|2$};
            \draw (P0) node[right] {$1|3|2$};

            \draw (30:1cm) node[above right] {$1|32$};
            \draw[xshift = -14, yshift = -7.5] (30:1cm) node[above right] {$1|23$};

            \draw (90:1cm) node[above  left] {$21|3$};

            \draw[xshift = -6, yshift = -12] (90:1cm) node[above right] {$12|3$};

            \draw (150:1cm) node[above left] {$2|31$};
            \draw[xshift = 7, yshift = -4] (150:1cm) node[above right] {$2|13$};

            \draw (210:1cm) node[below left] {$32|1$};
            \draw[xshift = 4, yshift = 6] (210:1cm) node[above right] {$23|1$};

            \draw (270:1cm) node[below right] {$3|21$};
            \draw[xshift =- 4, yshift = 10] (270:1cm) node[above right] {$3|12$};

            \draw (330:1cm) node[ right] {$31|2$};
            \draw[xshift =- 15, yshift = 5] (330:1cm) node[above right] {$13|2$};

            \draw (0,0) node {$312$};
            \draw[decoration={markings,mark=at position1 with {\arrow[line width = 0.5mm, rotate = -6]{stealth}}}, postaction={decorate}] (180:0.13) arc  (180:450:0.13);

            \end{tikzpicture}
        \quad
          \begin{tikzpicture}[scale = 3, rotate = 30  ]
            \path (0,0) coordinate (origin);
            \path (0:1cm) coordinate (P0);
            \path (1*60:1cm) coordinate (P1);
            \path (2*60:1cm) coordinate (P2);
            \path (3*60:1cm) coordinate (P3);
            \path (4*60:1cm) coordinate (P4);
            \path (5*60:1cm) coordinate (P5);

            \fill (P0) circle (0.5pt);
            \fill (P1) circle (0.5pt);
            \fill (P2) circle (0.5pt);
            \fill (P3) circle (0.5pt);
            \fill (P4) circle (0.5pt);
            \fill (P5) circle (0.5pt);
        \fill [color=black!30] (0:1) arc (0:60:1) -- (60:1) arc (60:120:1) -- (120:1) arc (120:180:1) -- (180:1) arc (180:240:1) -- (240:1) arc (240:300:1) -- (300:1) arc (300:360:1); 
        \draw[line width = 1.3]  (0:1) arc (0:60:1) -- (60:1) arc (60:120:1) -- (120:1) arc (120:180:1) -- (180:1) arc (180:240:1) -- (240:1) arc (240:300:1) -- (300:1) arc (300:360:1); 

            \foreach \x in{0,1,2,3,4,5}
            	\draw[ decoration={markings,mark=at position 0.5 with {\arrow[line width = 0.8mm]{stealth}}},postaction = {decorate}] (\x*60:1) arc (\x*60:60+\x*60:1) ;

    \foreach \x in{0,1,2,3,4,5}{
    	\draw[ color = black!30,decoration={markings,mark=at position 1 with {\arrow[line width = 0.8mm, color = black]{stealth}}},postaction = {decorate}] (\x*60:1) arc   (\x*60+240:210+\x*60:1) ;
          \draw (\x*60 :1) arc (\x*60+240:180+\x*60:1) ;
   };

            \draw (P1) node[above] {$1|2|3$};
            \draw (P2) node[left] {$2|1|3$};
            \draw (P3) node[left] {$2|3|1$};
            \draw (P4) node[below] {$3|2|1$};
            \draw (P5) node[right] {$3|1|2$};
            \draw (P0) node[right] {$1|3|2$};

            \draw (30:1cm) node[above right] {$1|32$};
            \draw[xshift = -14, yshift = -7.5] (30:1cm) node[above right] {$1|23$};

            \draw (90:1cm) node[above  left] {$21|3$};

            \draw[xshift = -6, yshift = -12] (90:1cm) node[above right] {$12|3$};

            \draw (150:1cm) node[above left] {$2|31$};
            \draw[xshift = 7, yshift = -4] (150:1cm) node[above right] {$2|13$};

            \draw (210:1cm) node[below left] {$32|1$};
            \draw[xshift = 4, yshift = 6] (210:1cm) node[above right] {$23|1$};

            \draw (270:1cm) node[below right] {$3|21$};
            \draw[xshift =- 4, yshift = 10] (270:1cm) node[above right] {$3|12$};

            \draw (330:1cm) node[ right] {$31|2$};
            \draw[xshift =- 15, yshift = 5] (330:1cm) node[above right] {$13|2$};

            \draw (0,0) node {$321$};
            \draw[decoration={markings,mark=at position1 with {\arrow[line width = 0.5mm, rotate = -6]{stealth}}}, postaction={decorate}] (180:0.13) arc  (180:450:0.13);
            \end{tikzpicture}
\caption{\label{fig:cellsofcomplex23}The six $2$-cells of $\CellComplexComplement(2,3)$}
    \end{figure}
        
Our drawing also specifies orientations of the cells that will be discussed in the next section
 (namely, the edges and $2$-cells).
\end{example}

\subsection{A short summary}

We have constructed and described the following objects:
\begin{itemize}[ $\bullet$ ]
\itemsep=0mm 
  \item $\PosetOfStratification(d,n)$: the face poset of the Fox--Neuwirth stratification of $W_n^{\oplus d}$, with minimal element $\hat0$;
  \item $\CellComplexFullSphere(d,n)$: a regular cell complex homeomorphic to $S(W_n^{\oplus d})\cong S^{d(n-1)-1}$, 
  a PL sphere; its face poset is $\PosetOfStratification(d,n){\setminus}\{\hat0\}$;
  \item $\CellComplexFullBall(d,n)$: a regular cell complex homeomorphic to $B(W_n^{\oplus d})$,
   a PL ball, given by $\CellComplexFullSphere(d,n)$ plus one additional $d(n-1)$-cell;
  \item $\SimplicialFullBall(d,n)$: the barycentric subdivision of $\CellComplexFullBall(d,n)$; 
  a simplicial complex, geometrically embedded
           as a star-shaped neighborhood of the origin in $W_n^{\oplus d}$;
          
  \item $\PosetOfComplement(d,n)$: the poset of all strata that lie in the complement of the arrangement (that is, have no $i_j=d+1$), 
  					ordered by reversed inclusion; it is the subposet of $\PosetOfStratification(d,n)^{op}$ given by all $(\sigma,\ii)$ 
					without any index $i_j=d+1$;
  \item $\CellComplexComplement(d,n)$: a regular cell complex of dimension $(d-1)(n-1)$; it is a subcomplex of the dual cell complex to
          $\CellComplexFullSphere(d,n)$; its poset of non-empty faces is $\PosetOfComplement(d,n)$.
  \item $\SimplicialComplement(d,n)$: the barycentric subdivision of $\CellComplexComplement(d,n)$, the order complex
                   $\Delta\PosetOfComplement(d,n)$; a simplicial complex, geometrically embedded
                   into the complement $\overline F(\R^d,n)\subset W_n^{\oplus d}$ 
                   as a subcomplex of the boundary of  the ball $\SimplicialFullBall(d,n)$. 
\end{itemize}

\section{Equivariant Obstruction Theory}

Our task now is to prove that for any $d\ge1$ and $n\ge2$ 
an equivariant map
\[
\CellComplexComplement(d,n) \ \longrightarrow_{\Symm_n}\  S(W_n^{\oplus(d-1)})
\]
exists if and only if $n$ is not a prime power. Here
\begin{compactitem}[ $\bullet$]
    \item   $\CellComplexComplement(d,n)$ is a finite regular CW complex of dimension $M:=(d-1)(n-1)$ on
            which $\Symm_n$ acts freely
            (the action on the $M$-dimensional cells is described explicitly in Theorem~\ref{thm:cell_complex});
    \item   $S(W_n^{\oplus(d-1)})$ is the set of all $(d-1)\times n$ matrices of column sum $0$ and
            sum of the squares of all entries equal to $1$. This is a sphere of dimension $(d-1)(n-1)-1=M-1$,
            on which $\Symm_n$ acts by permutation of columns; this action is not free
            for $n>2$, but it has no stationary points.
\end{compactitem}
We proceed to apply Equivariant Obstruction Theory, according to tom Dieck \cite[Sect.~II.3]{tomDieck:TransformationGroups}:
Since 
\begin{compactitem}[ $\bullet$]
    \item $\CellComplexComplement(d,n)$ is a free $\Symm_n$-cell complex of dimension $M$,
    \item the dimension of the $\Symm_n$-sphere $S(W_n^{\oplus(d-1)})$ is $M-1$, and
    \item $S(W_n^{\oplus(d-1)})$ is $(M-1)$-simple and $(M-2)$-connected,
\end{compactitem}
the existence of an $\Symm_n$-equivariant map is equivalent to the vanishing of the primary
obstruction
\[
\oo = [c_f]\ \in\ H^M_{\Symm_n}\big(\CellComplexComplement(d,n); \pi_{M-1}(S(W_n^{\oplus(d-1)}))\big).
\]
Here $c_f$ denotes the obstruction cocycle associated with a general position equivariant map 
$f:\CellComplexComplement(d,n)\rightarrow W_n^{\oplus(d-1)}$
(cf.~\cite[Def.~1.5, p.~2639]{BlagojevicBlagojevic}).
Its values on the $M$-cells $\check{c}$ are given by the degrees 
\[
c_f(\check{c})\ =\ \deg\big(r\circ f:\partial\check{c}\longrightarrow W_n^{\oplus(d-1)}{\setminus}\{0\}\longrightarrow S(W_n^{\oplus(d-1)})\big),
\]
where $r$ denotes the radial projection. 

The Hurewicz isomorphism gives an isomorphism of the coefficient $\Symm_n$-module with a homology group:
\[
\pi_{M-1}(S(W_n^{\oplus(d-1)})) \ \cong\ H_{M-1}(S(W_n^{\oplus(d-1)});\Z)\ =:\ \ZZ.
\]
As an abelian group this $\Symm_n$-module $\ZZ=\langle\xi\rangle$ is isomorphic to~$\Z$.
The action of the permutations $\tau\in\Symm_n$ on the module $\ZZ$ is given by
\[
  \tau\cdot\xi\ =\ (\sgn \pi)^{d-1}\xi.
\]
Indeed, each transposition $\tau_{ij}$ in  $\Symm_n$
acts on $W_n$ by reflection in the hyperplane $x_i=x_j$. Thus the action of $\Symm_n$ on $W_n$
reverses orientations according to the sign character, and the action on~$W_n^{\oplus(d-1)}$
is given by $\sgn^{d-1}$.

\subsection{Computing the obstruction cocycle}

We will now compute the equivariant obstruction cocycle $c_f$ in the cellular cochain group
\[
C^M_{\Symm_n}(\CellComplexComplement(d,n);\ZZ)
\]
and then show that $c_f$ is a coboundary of an equivariant $(M-1)$-cochain if and only if $n$ is not a power of a prime.
 
To compute the obstruction cocycle, we use the $\Symm_n$-equivariant linear projection map
\[
 f:  W_n^{\oplus d}\ \longrightarrow\ W_n^{\oplus(d-1)}
\]
obtained by deleting the last row from any matrix $(y_1,\dots,y_n)\in W_n^{\oplus d}$ of row sum $0$.
This map clearly commutes with the projection map $\nu:\R^{d\times n}\rightarrow W_n^{\oplus d}$
which subtracts from each column the average of the columns.

\begin{lemma}\label{lemma4.1}
    The linear map $f$ maps all $M$-cells of $\SimplicialComplement(d,n)\subset W_n^{\oplus d}$
    homeomorphically to the same star-shaped neighborhood $\SimplicialFullBall(d-1,n)$ of $0$ in~$W_n^{\oplus(d-1)}$.
    
    The symmetric group $\Symm_n$ acts on this neighborhood by homeomorphisms that
    reverse orientations according to $\sgn^{d-1}$. 
    Thus the $M$-cells and the $(M-1)$-cells of the complex $\CellComplexComplement(d,n)$  can be oriented
    in such a way that the $\Symm_n$-action on the $M$-cells and on the $(M-1)$-cells changes orientations according to $\sgn^{d-1}$, while
    the boundary of any $M$-cell is the sum of all $(M-1)$-cells in its boundary with $+1$ coefficients.
    
    With these orientations, the obstruction cocycle $c_f$ has the value $+1$ on all oriented $M$-cells
    of $\CellComplexComplement(d,n)$.
\end{lemma}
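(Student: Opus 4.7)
The plan is to verify the four assertions in order; the homeomorphism statement is the combinatorial heart, and the orientation claims and obstruction value follow from equivariance and pullback.

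First I would identify the $M$-cells: since the dimension formula $(i_1+\dots+i_{n-1})-(n-1)=(d-1)(n-1)$ together with $i_k\le d$ forces every $i_k=d$, the $M$-cells are precisely $\check{c}(\sigma,(d,\dots,d))$ for $\sigma\in\Symm_n$. The barycentric subdivision of such an $M$-cell is the order complex of the subposet
\[
P_\sigma:=\{(\sigma',\ii')\in\PosetOfComplement(d,n):(\sigma',\ii')\preceq(\sigma,(d,\dots,d))\}.
\]
The key step is to define a poset isomorphism $\phi_\sigma:P_\sigma\to\PosetOfStratification(d-1,n)$ by $(\sigma',\ii')\mapsto(\tau,\ii')$, where $\tau$ sorts in increasing numerical order the consecutive letters of $\sigma'$ in each maximal run that is separated only by indices $i'_k=d$. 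Here $\ii'$ is reinterpreted as a vector in $\{1,\dots,d\}^{n-1}$ with the value $d$ now playing the role of the equality-marker $d+1$ of the $(d-1)$-dimensional stratification, which is precisely what forces that sort. Comparing condition $(**)$ of Theorem~\ref{thm:cell_complex} with condition $(*)$ of Lemma~\ref{lemma:combstrat} would show that $\phi_\sigma$ is order-preserving in both directions; a direct calculation using the coordinate formula of Lemma~\ref{lem:coordinates}, together with $f(e_d)=0$ and $f(e_i)=e_i$ for $i<d$, then gives $f(v(\sigma',\ii'))=v(\phi_\sigma(\sigma',\ii'))$. Thus $f$ realizes the corresponding simplicial isomorphism between barycentric subdivisions, i.e., a PL homeomorphism of $\check{c}(\sigma,(d,\dots,d))$ onto $\SimplicialFullBall(d-1,n)$.

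Next, for the orientations: each transposition $\tau_{ij}\in\Symm_n$ acts on every $W_n$-summand of $W_n^{\oplus{d-1}}$ by reflection in the hyperplane $\{x_i=x_j\}$, so on $W_n^{\oplus{d-1}}$ it has determinant $(-1)^{d-1}=\sgn(\tau_{ij})^{d-1}$. Since $f$ commutes with column permutations, it is $\Symm_n$-equivariant. I would then fix an orientation on $\SimplicialFullBall(d-1,n)$ and pull it back through $f|_{\check{c}}$ to orient every $M$-cell; equivariance forces these orientations to transform by $\sgn^{d-1}$. Pulling back the induced boundary orientation of $\partial\SimplicialFullBall(d-1,n)$ along $f|_{\check{e}}$ orients each $(M-1)$-cell $\check{e}$ unambiguously, since $f|_{\check{e}}$ depends only on $\check{e}$ and not on any ambient $M$-cell. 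To verify the boundary-coefficient claim, note that if $\check{c}$ is an $M$-cell containing $\check{e}$ in its boundary, then $f|_{\check{c}}$ is a homeomorphism of pairs $(\check{c},\partial\check{c})\to(\SimplicialFullBall(d-1,n),\partial\SimplicialFullBall(d-1,n))$, so the boundary orientation of $\check{e}$ induced from $\check{c}$ matches the defining orientation of $\check{e}$, giving incidence coefficient $+1$.

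Finally, the obstruction cocycle value on $\check{c}(\sigma,(d,\dots,d))$ is the degree of the composition $\partial\check{c}\xrightarrow{\,f\,}\partial\SimplicialFullBall(d-1,n)\xrightarrow{\,r\,}S(W_n^{\oplus{d-1}})$, where $r$ is the radial retraction. Both maps are orientation-preserving homeomorphisms of $(M-1)$-spheres, so the composition has degree $+1$. The hardest part will be the first paragraph's combinatorial step: verifying that $\phi_\sigma$ really is a poset isomorphism (so conditions $(*)$ and $(**)$ correspond correctly under the sort-and-relabel operation) and that it is realized by $f$ on vertex coordinates. Once that identification is in place, the remaining assertions are formal consequences of equivariance and orientation pullback.
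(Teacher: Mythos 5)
Your argument follows the paper's proof closely: identify the $M$-cells as $\check{c}(\sigma,(d,\dots,d))$, show on barycentric-subdivision vertices that $f$ collapses $\less{d}$-separated runs (which is exactly the paper's observation that $f$ sends $v(\sigma',\ii')$ to the vertex with the same symbol up to reordering of letters separated only by $\less{d}$), conclude each $M$-cell maps homeomorphically onto $\SimplicialFullBall(d-1,n)$, and then set up orientations by pullback through $f$. One small slip worth fixing: your $\phi_\sigma$ is not order-\emph{preserving} but order-\emph{reversing} (the $M$-cell $(\sigma,(d,\dots,d))$ maps to $\hat0\in\PosetOfStratification(d-1,n)$, and the vertices $(\sigma'',(1,\dots,1))$ map to the maximal chambers), which is to be expected since $\PosetOfComplement(d,n)$ sits inside $\PosetOfStratification(d,n)^{\mathrm{op}}$; this does not affect the conclusion, as the order complexes of $P$ and $P^{\mathrm{op}}$ coincide, so $f$ still realizes the desired simplicial isomorphism.
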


\begin{proof}
    The $M$-cells of $\CellComplexComplement(d,n)$ are given as $\check{c}(\sigma_1\less{d}\sigma_2\less{d}\cdots\less{d}\sigma_n)$
    for some permutation $\sigma\in\Symm_n$. The $(M-1)$-cells are given by $\check{c}(\sigma_1\less{d}\cdots\sigma_i\less{d-1}\sigma_{i+1}\cdots\less{d}\sigma_n)$
    for some $\sigma\in\Symm_n$ and $1\le i<n$.
    
    The vertices of the barycentric subdivision of the cell $\check{c}(\sigma_1\less{d}\sigma_2\less{d}\cdots\less{d}\sigma_n)$ are exactly those
    vertices $v(\sigma_1'\less{{i_1}}\cdots\less{{i_{n-1}}}\sigma_n')$ for which the letters
    $\sigma_j'$ that are separated only by “$\less{d}$”s appear in the same order in $\sigma$.
     
    The projection $f$ deletes the last row, and thus maps the vertex $v(\sigma_1\less{{i_1}}\cdots\less{{i_{n-1}}}\sigma_n)$
    to the vertex with the same symbol $v(\sigma_1\less{{i_1}}\cdots\less{{i_{n-1}}}\sigma_n)$ of $\SimplicialFullBall(d-1,n)$,
    except for reordering of symbols separated only by $\less{d}$.
    Thus exactly the vertices which differ only by reordering letters
    $\sigma_j'$ that are separated only by “$\less{d}$”s are mapped by $f$ to the same vertex of $\SimplicialFullBall(d-1,n)$.
    
    Thus we get that the barycentric subdivision of each maximal cell 
    (which is a simplicial complex, embedded in the higher-dimensional space $W_n^{\oplus d}$)
    is mapped isomorphically onto the star-shaped neighborhood $\SimplicialFullBall(d-1,n)$
    of the origin in~$W_n^{\oplus(d-1)}$ (as depicted for $n=3$ in Figure~\ref{fig:figure3}). In particular, the vertex $v(\sigma_1\less{d}\sigma_2\less{d}\cdots\less{d}\sigma_n)$
    is the only point of the $M$-cell $\check{c}(\sigma_1\less{d}\sigma_2\less{d}\cdots\less{d}\sigma_n)$
    that is mapped to $0\in W_n^{\oplus(d-1)}$.
    
    We interpret $\SimplicialFullBall(d-1,n)$ as the barycentric subdivision of 
    a cellular $M$-ball $\CellComplexFullBall(d-1,n)$ with one $M$-cell. The $(M-1)$-cells in its boundary are
    given by $\check{c}(\sigma_1\less{d}\cdots\sigma_j\less{{d-1}}\sigma_{j+1}\cdots\less{d}\sigma_n)$ with exactly
    one index $i_j=d-1$ and all other $i_k$'s equal to $d$. Clearly there are $2^n-2$ of those, 
    as they are given by the proper nonempty subsets $\{\sigma_1,\dots,\sigma_j\}\subset\{1,\dots,n\}$.
    
    As $f$ induces a surjective cellular map from $\CellComplexComplement(d,n)$ to $\CellComplexFullBall(d-1,n)$ that is a
    homeomorphism restricted to each cell, we can proceed as follows. We fix an orientation of the one
    $M$-cell of $\CellComplexFullBall(d-1,n)$. (This amounts to fixing an orientation of $W_n^{\oplus(d-1)}$.)
    We define orientations of the $(M-1)$-cells in the boundary of the $M$-cell of $\CellComplexFullBall(d-1,n)$ by demanding that
    the cellular boundary of the $M$-cell is given by the formal sum of all the $(M-1)$-cells with $+1$ coefficients.
    We then define the orientation for each of the $(M-1)$-cells of $\CellComplexComplement(d,n)$ by demanding that
    the map $f$, which maps it homeomorphically to an $(M-1)$-cell in the boundary of $\CellComplexFullBall(d-1,n)$,
    preserves the orientation.
\end{proof}

\subsection{When is the obstruction cocycle a coboundary?}

\begin{lemma}\label{lemma4.2}
    Let $b$ be any equivariant $(M-1)$-dimensional integral equivariant cellular cochain on $\CellComplexComplement(d,n)$,
    then the value of its coboundary is 
    \[
    x_1\binom n1 + x_1\binom n2 + \dots +x_{n-1}\binom n{n-1} 
    \]
    on all $M$-cells, for integers $x_1,\dots,x_{n-1}$.
\end{lemma}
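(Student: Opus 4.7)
The plan is to combine the $\Symm_n$-orbit structure on $(M-1)$-cells with the facet description of $M$-cells from Lemma~\ref{lemma4.1}. First, I would identify the orbits of $(M-1)$-cells. Each $(M-1)$-cell of $\CellComplexComplement(d,n)$ has the form $\check{c}(\sigma_1\less{d}\cdots\sigma_j\less{d-1}\sigma_{j+1}\cdots\less{d}\sigma_n)$ for a unique permutation $\sigma\in\Symm_n$ and position $j\in\{1,\dots,n-1\}$. Since $\Symm_n$ acts freely via $\pi\cdot(\sigma,j)=(\pi\sigma,j)$ and preserves $j$, the orbits are indexed by $j$, yielding exactly $n-1$ orbits each of size $n!$.

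Next, I would observe that any equivariant integral cochain $b$ is constant on orbits when all cells are given the orientations of Lemma~\ref{lemma4.1}. Those orientations make the $\Symm_n$-action twist oriented $(M-1)$-cells by $\sgn^{d-1}$, while the coefficient module $\ZZ$ carries the same $\sgn^{d-1}$ character; the two sign changes cancel in the equivariance identity $b(\pi\cdot c)=\pi\cdot b(c)$, so $b$ takes a single integer value on each orbit. Thus $b$ is determined by integers $x_1,\dots,x_{n-1}$, with $x_j$ the common value of $b$ on orbit $j$.

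Finally, I would enumerate the facets of a typical $M$-cell. The $M$-cell $\check{c}(\sigma)=\check{c}(\sigma_1\less{d}\cdots\less{d}\sigma_n)$ is combinatorially an $(n-1)$-dimensional permutahedron; unraveling Condition~$(**)$ of Theorem~\ref{thm:cell_complex} for $(\sigma',\ii')=(\sigma,(d,\dots,d))$ and $\ii$ with a single entry $i_p=d-1$ shows that its $2^n-2$ facets are naturally indexed by the proper non-empty subsets $S\subset\{1,\dots,n\}$. Explicitly, the facet associated to $S$ is $\check{c}(\tau_S)$, where $\tau_S$ lists the elements of $S$ in their $\sigma$-order followed by the elements of $S^c$ in their $\sigma$-order, with the separator $\less{d-1}$ placed at position $|S|$; in particular this facet belongs to orbit $|S|$. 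Using Lemma~\ref{lemma4.1}, the boundary of $\check{c}(\sigma)$ is the sum of these $2^n-2$ facets with coefficients $+1$, so
\[
(\delta b)(\check{c}(\sigma))\ =\ \sum_{\emptyset\subsetneq S\subsetneq\{1,\dots,n\}} x_{|S|}\ =\ \sum_{j=1}^{n-1}\binom{n}{j}\,x_j,
\]
independent of $\sigma$, as claimed. The main technical point is the orbit identification in the third step: once one knows that the facet indexed by $S$ carries its $\less{d-1}$ at position~$|S|$, the binomial coefficients fall out immediately by counting subsets of each size.
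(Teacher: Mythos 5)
Your proof is correct and follows the same route as the paper's (very terse) argument: use equivariance together with the matching $\sgn^{d-1}$ twists on $(M-1)$-chains and on the coefficient module $\ZZ$ to show $b$ is determined by $n-1$ integers $x_1,\dots,x_{n-1}$ (one per $\Symm_n$-orbit of ridges, indexed by the position $j$ of the $\less{d-1}$), and then count ridges of each orbit in the boundary of one $M$-cell via Lemma~\ref{lemma4.1}. Your unpacking of Condition~$(**)$ to show that the facets of $\check{c}(\sigma)$ are indexed by proper nonempty subsets $S$, with the facet for $S$ lying in orbit $|S|$, is exactly the content needed, and your bookkeeping is actually more precise than the paper's: the $\Symm_n$-action on ridges is free, so each of the $n-1$ orbits has size $n!$, and the binomial coefficient $\binom{n}{j}$ is the number of ridges from orbit $j$ that occur as facets of a single $M$-cell (not the orbit size itself, as the paper's one-line statement might be misread to say).
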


\begin{proof}
    As $b$ needs to be equivariant, we get the condition that the values
    are constant on orbits. (Indeed, this is since the boundary
    operator does not introduce any signs, and the symmetric group acts by introducing the
    same signs $\sgn^{d-1}$ both on the $(M-1)$-cells and on the $\Symm_n$-module~$\ZZ$.) 

    Finally, the $\Symm_n$-orbit of the $(M-1)$-cell $\check{c}(\sigma_1\less{d}\cdots\sigma_j\less{{d-1}}\sigma_{j+1}\cdots\less{d}\sigma_n)$
    has size $\binom nj$.
\end{proof}

\begin{proof}[Proof of Theorem~\ref{thm:main}]
    By \cite[Sect.~II.3, pp.~119--120]{tomDieck:TransformationGroups}, the equivariant map exists if and only if
    the cohomology class $\oo =[c_f]$ vanishes, that is, if $c_f$ is the coboundary $c_f=\delta b$
    of some $(M-1)$-dimensional equivariant cochain~$b$. We have seen 
    in Lemmas~\ref{lemma4.1} and~\ref{lemma4.2} that this is the case if and only if 
    \begin{equation}\label{eq:binom}
    x_1\binom n1 + x_1\binom n2 + \dots +x_{n-1}\binom n{n-1} \ =\ 1
    \end{equation}
    has a solution in integers. By the Chinese remainder theorem, this happens if and only if
    the binomial coefficients do not have a non-trivial common factor.
    By Ram's result, as quoted in the introduction and proved below, this holds if and only if $n$ is not a prime power.
\end{proof} 

\begin{proposition}[Ram \cite{Ram1909}]\label{prop:ram}
 For all $n\in \mathbb{N}$ we have
\[
\gcd\Big\{\binom{n}{1},\binom{n}{2},\dots,\binom{n}{n-1}\Big\}\ =\ 
\begin{cases}
    p   &   \text{if }n \text{ is a prime power, }n=p^k,\\
    1   &   \text{otherwise.}
\end{cases}
\]
\end{proposition}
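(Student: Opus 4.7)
The plan is to prove this via Kummer's theorem: for any prime $p$, the $p$-adic valuation $v_p\!\left(\binom{n}{k}\right)$ equals the number of carries produced when $k$ and $n-k$ are added in base~$p$. This reduces both directions of the statement to a direct digit computation.

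For the prime power case $n = p^k$, I note that $n$ has base-$p$ expansion $1\underbrace{0\cdots 0}_{k}$. For any $1 \le i \le p^k - 1$, set $j := v_p(i) < k$; the lowest nonzero digit of $i$ sits at position $j$, and writing out the sum $i + (p^k - i)$ in base $p$ one sees that carries propagate through exactly the positions $j, j+1, \dots, k-1$. Hence $v_p\!\left(\binom{p^k}{i}\right) = k - v_p(i) \ge 1$, so every such binomial is divisible by $p$. Specializing to $i = p^{k-1}$ yields exactly one carry, so $v_p\!\left(\binom{p^k}{p^{k-1}}\right) = 1$, and therefore the gcd equals $p$ exactly.

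For the remaining case, suppose $n$ is not a prime power; it suffices to exhibit, for every prime $q$, some $i$ with $q \nmid \binom{n}{i}$. If $q \nmid n$, then $\binom{n}{1} = n$ does the job. Otherwise, write $n = q^a m$ with $q \nmid m$ and $m \ge 2$ (this is where the hypothesis enters), and take $i = q^a < n$. The base-$q$ representation of $i$ is a single digit $1$ at position $a$, while $n$ has digits $0$ at positions $0,\dots,a-1$ and digit $m_0 \in \{1,\dots,q-1\}$ at position $a$. Thus $n - i$ has digit $m_0 - 1 \ge 0$ at position $a$ and agrees with $n$ elsewhere; adding $i + (n-i)$ produces no carries, so $q \nmid \binom{n}{i}$.

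The main obstacle, such as it is, lies in the bookkeeping of the base-$q$ digits in the second case — the key point being that $m \ge 2$, which is precisely the hypothesis that $n$ is not a prime power and which guarantees both $i = q^a < n$ and $m_0 - 1 \ge 0$. Everything else is a direct unwinding of Kummer's theorem.
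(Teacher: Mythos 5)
Your proof is correct, and you pick out exactly the same binomial coefficients as the paper does: $\binom{p^k}{p^{k-1}}$ together with $\binom{p^k}{1}=p^k$ in the prime-power case, and $\binom{n}{q^a}$ with $q^a\,\|\,n$ for each prime $q\mid n$ in the composite case. The difference is the tool used to read off the $p$-adic valuations: the paper applies Legendre's formula to the factorials $p_i^{k_i}!$ and $n!/(n-p_i^{k_i})!$ and compares sums of floor terms, whereas you apply Kummer's theorem and count carries in base $q$. The two are equivalent in content (Kummer's theorem is the usual corollary of Legendre's formula), but the carry-counting is tidier here: it turns the computation into a digit-by-digit check and in the prime-power case gives you the sharp identity $v_p\bigl(\binom{p^k}{i}\bigr)=k-v_p(i)$, where the paper only asserts $\ge 1$. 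One small point you should make explicit at the end of the prime-power case: from $v_p(\gcd)=1$ alone you only learn the $p$-part of the gcd; to conclude the gcd is exactly $p$ (and not $p$ times a unit prime to $p$), you also need that the gcd divides $\binom{p^k}{1}=p^k$, so it has no prime factors other than $p$ — the paper states this explicitly and you should too.
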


\begin{proof}[Proof (by M. Firsching and P. Landweber)]
For $n=p^k$, $k\geq 1$ and an indeterminate $x$ we have 
$
 (1+x)^{p^k}\equiv 1+x^{p^k} \mod{p},
$
so all the binomial coefficients we consider are divisible by $p$. Taking the equation
\[
 (1+x)^{p^{k-1}}\equiv 1+x^{p^{k-1}} \mod{p}
\]
to the $p$th power, we obtain
\[
 (1+x)^{p^{k}}\equiv (1+x^{p^{k-1}})^p \mod{p^2},
\]
so $\binom{p^k}{p^{k-1}}\equiv\binom{p}{1}\not\equiv0 \mod{p^2}$ and hence 
$\gcd\big\{p^k=\binom{n}{1},\binom{n}{2},\dots,\binom{n}{n-1}\big\}=p$.

If $n$ is not a prime power, let $n = \prod _{i=1}^{m} p_i^{k_i}$ for $m>1$, $k_i>0$ and distinct primes $p_i$.  
Let $p$ be any of the $p_i$ and $k=k_i$, so that $n=p^{k} a$ and $p$ does not divide~$a$.  
Letting $x$ be an indeterminate, we have 
\[
(1+x)^n = (1+x)^{p^{k} a} \equiv (1+x^{p^k})^a \mod{p}
\]
so $\binom{n}{p^k} \equiv \binom{a}{1} \not\equiv 0\mod{p}$.  Therefore  $p_i$ does not divide $\binom{n}{p_i^{k_i}}$ for each $i$. Hence
\[
\gcd\Big\{n=\binom{n}{1},\binom{n}{p_1^{k_1}},\binom{n}{p_2^{k_2}},\hdots,\binom{n}{p_m^{k_m}}\Big\}=1.\]
and thus $\gcd\left\{\binom{n}{1},\binom{n}{2},\hdots,\binom{n}{n-1}\right\}= 1$. 
\end{proof}

\begin{remark}[by P. Landweber]
    Ram's result (Proposition~\ref{prop:ram}) is also useful in the determination of polynomial generators 
    of the complex bordism ring, a graded polynomial ring over $\Z$ with one generator in each positive even dimension.  
    In dimension $2n$ for which $n+1$ is a prime, the bordism class of the complex projective space $\CP^n$ serves 
    as a generator, but in other dimensions one needs to know Ram's result in order to exhibit generators made 
    from complex projective spaces and Milnor hypersurfaces of type $(1,1)$ in products of two complex projective spaces.  
    This result was known and used by Milnor and Novikov in the early 1960's, and is known to other authors working on complex bordism (cf.\ \cite[pp.~249--252]{Milnor:CollectedIII}, \cite[Problem 16-E on p.~196]{Milnor-characteristic}, \cite[Appendix~2]{Novikov1962}), but usually no proof has been quoted or given in this context. 
	One notable source is Lazard \cite[Lemme 3]{Lazard1955}. 
\end{remark}

In the case when $n$ is a power of $2$ the equation (\ref{eq:binom}) has no
solution modulo $2$, which implies that the top Stiefel--Whitney class of
the bundle
\[
W_n \to  F(\R^d,n)\times_{\Symm_n}W_n \to F(\R^d,n)/\Symm_n
\]
is non-trivial. This fact was first proved by Cohen \& Handel \cite[Lemma 3.2, page 203]{CohenHandel1978} in the
case $d=2$,  and by Chisholm \cite[Lemma 3]{Chisholm1979} in the case
when $d$ is a power of $2$. For further
application of this fact in the context of $k$-regular mappings of
Euclidean spaces consult~\cite[Sect.~8.5]{Z129}.

\subsection*{Three corollaries}

Our calculation above implies a complete calculation for the top equivariant
cohomology group of the cell complex $\CellComplexComplement(d,n)$.

\begin{corollary}
    For $d,n\ge2$,
    \[
    H^M_{\Symm_n}\big(\CellComplexComplement(d,n); \pi_{M-1}(S(W_n^{\oplus(d-1)}))\big)\ =\ 
     \langle [c_f]\rangle\ \cong\ 
    \begin{cases}
    \Z/p & \text{if }n=p^k\text{ is a prime power},\\
    0   &  \text{otherwise.}    
    \end{cases}
    \]
\end{corollary}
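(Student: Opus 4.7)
My plan is to exploit the fact that $\CellComplexComplement(d,n)$ has dimension exactly $M$, so the equivariant cellular cochain complex truncates at degree $M$: the group
\[
H^M_{\Symm_n}\bigl(\CellComplexComplement(d,n);\ZZ\bigr)
\]
is simply the cokernel of the coboundary
\[
\delta: C^{M-1}_{\Symm_n}\bigl(\CellComplexComplement(d,n);\ZZ\bigr)\longrightarrow C^M_{\Symm_n}\bigl(\CellComplexComplement(d,n);\ZZ\bigr),
\]
with no further quotient needed. Everything reduces to identifying both sides of this map.

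First I would identify the target $C^M_{\Symm_n}(\CellComplexComplement(d,n);\ZZ)$ with $\Z$. This uses the orientation convention from Lemma~\ref{lemma4.1}: on the oriented $M$-cells the group $\Symm_n$ acts via $\sgn^{d-1}$, and this matches the $\Symm_n$-action on the coefficient module~$\ZZ$ exactly. Since $\Symm_n$ acts freely and transitively on the $n!$ top-dimensional cells, an equivariant cochain is determined by its value on any single oriented $M$-cell and that value may be an arbitrary integer; moreover, by Lemma~\ref{lemma4.1} the obstruction cocycle $c_f$ corresponds under this identification to the generator $1\in\Z$, so once the cohomology group is computed it will automatically be generated by $[c_f]$.

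Next I would use Lemma~\ref{lemma4.2} to compute the image of $\delta$ inside this copy of~$\Z$: every equivariant $(M-1)$-cochain $b$ produces a coboundary whose value on each $M$-cell is an integer of the form $x_1\tbinom n1 + x_2\tbinom n2 + \dots + x_{n-1}\tbinom n{n-1}$, with the $x_i$ indexed by the $n-1$ orbits of $(M-1)$-cells. Conversely, every such integer is realized: the orbits of $(M-1)$-cells are disjoint, so the values $x_1,\dots,x_{n-1}$ may be chosen independently on a single representative from each orbit. Hence $\operatorname{im}\delta$ equals the ideal generated by the $\binom{n}{j}$, which by Bezout is the principal ideal $(\gcd\{\tbinom n1,\dots,\tbinom n{n-1}\})$.

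Finally, Proposition~\ref{prop:ram} evaluates this gcd as $p$ when $n=p^k$ and as $1$ otherwise, yielding $H^M_{\Symm_n}(\CellComplexComplement(d,n);\ZZ)\cong\Z/p$ or $0$ respectively, with $[c_f]$ as the generator. No step is genuinely hard since all the work was done in Lemmas~\ref{lemma4.1}, \ref{lemma4.2} and Proposition~\ref{prop:ram}; the only point to verify carefully is the matching of the two $\sgn^{d-1}$ twists (on oriented top cells and on $\ZZ$), which is what makes the equivariant $M$-cochain group free of rank one rather than trivial.
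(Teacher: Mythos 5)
Your argument is correct and follows the same route as the paper's brief justification: identify $C^M_{\Symm_n}(\CellComplexComplement(d,n);\ZZ)$ with $\Z$ via the single free orbit of $M$-cells, observe that $H^M$ is the cokernel of $\delta$ because the complex has no cells above dimension $M$, compute the image of $\delta$ as the ideal generated by $\gcd\{\binom n1,\dots,\binom n{n-1}\}$ via Lemma~\ref{lemma4.2}, and invoke Proposition~\ref{prop:ram}. One small imprecision: the matching of the two $\sgn^{d-1}$ twists is not what makes $C^M_{\Symm_n}\cong\Z$ (that follows already from the freeness of the top-cell orbit, by $\mathrm{Hom}_{\Symm_n}(\Z[\Symm_n]^{\mathrm{tw}},\ZZ)\cong\Z$ for any twist); the sign-matching is rather what powers the computation of $\operatorname{im}\delta$ in Lemma~\ref{lemma4.2}, where it forces equivariant $(M-1)$-cochains to take constant (unsigned) values on each orbit.
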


Indeed, our cell complex model has only one orbit of maximal cells, thus the group of
equivariant $M$-dimensional cochains is isomorphic to $\Z$. Our calculation shows
that the subgroup of coboundaries is generated by the element
$\text{gcd}\{\binom n1,\binom n2,\dots,\binom n{n-1}\}$.

Moreover, we note a stronger version of the non-existence part of Theorem~\ref{thm:main}.

\begin{corollary}
    For $d\ge2$ and prime powers $n=p^k$, there is no $G$-equivariant map 
    \[
    F(\R^d,n)\ \longrightarrow_{G}\  S(W_n^{\oplus(d-1)})
    \]
    for any $p$-Sylow subgroup $G\subset\Symm_n$.
\end{corollary}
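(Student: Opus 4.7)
The plan is to run equivariant obstruction theory for $G$ in exact parallel with the argument for $\Symm_n$ from the proof of Theorem~\ref{thm:main}, and then to compare the $G$- and $\Symm_n$-obstructions via a transfer (averaging) argument. First I would check that the hypotheses for obstruction theory (in the formulation of tom Dieck) carry over from $\Symm_n$ to $G$: the complex $\CellComplexComplement(d,n)$ is automatically a free $G$-CW complex since freeness passes to subgroups, while $(M-1)$-simpleness and $(M-2)$-connectedness of the target sphere $S(W_n^{\oplus{d-1}})$ are group-independent properties, and the coefficient module $\ZZ$ (twisted by $\sgn^{d-1}$) makes sense for any subgroup of $\Symm_n$. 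Hence, using Theorem~\ref{thm:cell_complex} to replace $F(\R^d,n)$ by its $G$-equivariant deformation retract $\CellComplexComplement(d,n)$, the existence of a $G$-equivariant map $F(\R^d,n)\to S(W_n^{\oplus{d-1}})$ is equivalent to the vanishing of a primary obstruction $\oo_G \in H^M_G(\CellComplexComplement(d,n);\ZZ)$.

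The key observation is that the cocycle $c_f$ built in Lemma~\ref{lemma4.1} from the $\Symm_n$-equivariant linear projection $W_n^{\oplus d}\to W_n^{\oplus d-1}$ is in particular $G$-equivariant, so $\oo_G$ is the image of the $\Symm_n$-obstruction $\oo$ under the restriction map
\[
  \mathrm{res}: H^M_{\Symm_n}(\CellComplexComplement(d,n); \ZZ) \longrightarrow H^M_G(\CellComplexComplement(d,n); \ZZ).
\]
The main step is then a transfer argument. Assume for contradiction that $\oo_G=0$, so there exists a $G$-equivariant $(M-1)$-cochain $b$ with $\delta b=c_f$. Averaging $b$ with the appropriate $\sgn^{d-1}$ twists over a set of coset representatives $\pi_1,\dots,\pi_N$ of $G$ in $\Symm_n$ (where $N=[\Symm_n:G]$) produces a $\Symm_n$-equivariant cochain $\tilde b$ whose coboundary satisfies $\delta\tilde b=N\cdot c_f$, since $c_f$ is itself $\Symm_n$-equivariant and $\delta$ commutes with the action. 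By the preceding corollary, $H^M_{\Symm_n}(\CellComplexComplement(d,n);\ZZ)\cong\Z/p$ is generated by $\oo$, and by the defining property of a Sylow $p$-subgroup the index $N$ is coprime to $p$; hence $N\cdot \oo\neq 0$ in cohomology, contradicting the existence of $\tilde b$.

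The step I expect to require the most care is the bookkeeping of the twisted coefficient module $\ZZ$ inside the averaging formula. The sign character $\sgn^{d-1}$ has to be inserted correctly both in the definition of $G$-equivariance for $b$ and in the formula for $\tilde b$ in terms of the $\pi_i$, so that $\tilde b$ is genuinely $\Symm_n$-equivariant, is independent of the choice of coset representatives, and has coboundary literally equal to $N\cdot c_f$ (rather than a signed combination of translates). Once this sign bookkeeping is carried out, the remaining content of the proof reduces to the nonvanishing of $N\cdot \oo$ in $\Z/p$, which is automatic.
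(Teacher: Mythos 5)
Your proposal is correct and follows exactly the route the paper indicates: the paper states only that the corollary ``follows from our proof for Theorem~\ref{thm:main} using transfer'' and defers details to the reference \cite{Z129}, and your cochain-level averaging argument is the standard way to realize that transfer, yielding $\delta\tilde b = [\Symm_n:G]\cdot c_f$ and hence $[\Symm_n:G]\cdot\oo = 0$, contradicting that $[\Symm_n:G]$ is invertible in $\Z/p$.
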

 
This follows from our proof for Theorem \ref{thm:main} using transfer; see \cite[Sect.~5.2]{Z129}.
Moreover, in \cite[Thm.~8.3]{Z129} we derive from equivariant cohomology calculations (the Fadell--Husseini index)
that in the case when $n=p$ is a prime, the equivariant map does not exist for the cyclic (Sylow) subgroup $\Z/n\subset\Symm_n$.

Finally, we obtain an extension of a Borsuk--Ulam type theorem by Cohen \& Connett \cite[Thm.~1]{Cohen-Connett},
namely the generalization from a cyclic group of prime order to an arbitrary cyclic group. 
(See \cite[Thm.~8.9]{Z129} for corresponding result for elementary abelian groups, obtained with 
equivariant cohomology methods.)

\begin{corollary}
\label{cor:Coincidence-1}
Let $n\ge2$ and $d\ge2$ be integers, let $X$ be a free Hausdorff $\Z/n$-space, 
and $f\colon X\to\R^d$ be a continuous map.
If $X$ is $(d-1)(n-1)$-connected, then there exist $x\in X$ and $a\in \Z/n$, $a\neq0$, such that
\[
f(x)=f(a\cdot x).
\]
\end{corollary}

\begin{proof}
If $f$ is injective on $\Z/n$-orbits, then we obtain a continuous map  
\[
  \psi: X\longrightarrow F(\R^d,n), \qquad
        x \longmapsto \big(f(0\cdot x),f(1\cdot x),\dots,f((n-1)\cdot x)\big),
\]	
which is $\Z/n$-equivariant if we use the “left shift” cyclic $\Z/n$-action on $F(\R^d,n)$.

Combined with the equivariant retraction  $r: F(\R^d,n)\rightarrow\CellComplexComplement(d,n)$ constructed above, this yields a $\Z/n$-equivariant map
$r\circ\psi:X\rightarrow\CellComplexComplement(d,n)$ from a free $(d-1)(n-1)$-connected $\Z/n$-space to a free $\Z/n$-space of dimension $(d-1)(n-1)$,
which contradicts Dold's theorem \cite{Dold} \cite[Thm.~6.2.6]{Matousek-BU}.
\end{proof}

\subsection*{Acknowledgements}
When the first version of this paper was released on the \texttt{arXiv}, Dev Sinha pointed us to the recent preprints
\cite{GiustiSinha} and~\cite{AyalaHepworth}. In response to this, we decided to change our  
notation to be in line with these papers,
which continue the developments started by Fox \& Neuwirth \cite{FoxNeuwirth62} and Fuks~\cite{Fuks70}.

Thanks to Imre Bárány, Roman Karasev, Wolfgang Lück, and Jim Stasheff for interesting and useful discussions.
In addition, we are grateful to Moritz Firsching and Peter Landweber for many valuable comments and improvements,
including the reference to Ram's result \cite{Ram1909} and the proof we give for it above.
Moreover, we thank the Israel Journal's referee for very insightful and helpful remarks that have
led to various improvements in the exposition.

Thanks to Till Tantau for \texttt{TikZ} and to Miriam Schlöter for the \texttt{TikZ} figures.

We are grateful to Aleksandra and Torsten for their constant support.

\begin{small} 

\providecommand{\noopsort}[1]{}

\end{small}
\end{document}